\newtheorem{theorem}{Theorem}[section]
\newtheorem{lemma}[theorem]{Lemma}
\newtheorem{proposition}[theorem]{Proposition}
\newtheorem{corollary}[theorem]{Corollary}
\theoremstyle{definition}
\newtheorem{definition}[theorem]{Definition}
\newtheorem{example}[theorem]{Example}
\numberwithin{equation}{section}
\def\<{\langle}
\def\>{\rangle}
\long\def\alert#1{\smallskip{\hskip\parindent\vrule%
\vbox{\advance\hsize-2\parindent\hrule\smallskip\parindent.4\parindent%
\narrower\noindent#1\smallskip\hrule}\vrule\hfill}\smallskip}
\begin{document}
\title[Classical 2-absorbing submodules]{Classical 2-absorbing submodules of modules over commutative rings}
\author[Mostafanasab, Tekir and ULUCAK]{Hojjat Mostafanasab, \"{U}nsal Tekir and K\"{u}r\c{s}at Hakan Oral}

\subjclass[2010]{Primary: 13A15; secondary: 13C99; 13F05}
\keywords{Classical prime submodule, Classical 2-absorbing submodule.}

\begin{abstract}
In this article, all rings are commutative with nonzero identity.
Let $M$ be an $R$-module. A proper submodule $N$ of $M$ 
is called a {\it classical prime submodule}, if for each  
$m\in M$ and elements $a,b\in R$, $abm\in N$ implies
that $am\in N$ or $bm\in N$. We introduce the 
concept of ``classical 2-absorbing submodules'' as a generalization
of ``classical prime submodules''. We say that a proper submodule
$N$ of $M$ is a {\it classical 2-absorbing submodule} if whenever
$a,b,c\in R$ and $m\in M$ with $abcm\in N$,
then $abm\in N$ or $acm\in N$
or $bcm\in N$.
\end{abstract}

\maketitle

\section{\protect\bigskip Introduction} 

Throughout this paper, we assume that all rings are commutative with $1\neq 0
$. Let $R$ be a commutative ring and $M$ be an $R$-module. 
A proper
submodule $N$ of $M$ is said to be a \textit{prime submodule}, if for
each element $a\in R$ and $m\in M$, $am\in N$ implies that $m\in N$ or $%
a\in(N:_RM)=\{r\in R\mid rM\subseteq N\}$. A proper submodule $N$ of $M$ is called a \textit{classical prime submodule}, if for
each $m\in M$ and $a,b\in R$, $abm\in N$ implies that $am\in N$ or $%
bm\in N$. This notion of classical prime submodules has been extensively
studied by Behboodi in \cite{B1,B2} (see also, \cite{BK}, in which, the
notion of \textquotedblleft weakly prime submodules\textquotedblright\ is
investigated). For more information on weakly prime submodules, the reader is referred to \cite{A1,A,BSh}.

Badawi gave a generalization of prime ideals in \cite{B} and said such
ideals $2$-absorbing ideals. A proper ideal $I$ of $R$ is a \textit{%
2-absorbing ideal of} $R$ if whenever $a,b,c\in R$ and $abc\in I$, then $%
ab\in I$ or $ac\in I$ or $bc\in I$. He proved that $I$ is a $2$-absorbing
ideal of $R$ if and only if whenever $I_{1},I_{2},I_{3}$ are ideals of $R$
with $I_{1}I_{2}I_{3}\subseteq I$, then $I_{1}I_{2}\subseteq I$ or $%
I_{1}I_{3}\subseteq I$ or $I_{2}I_{3}\subseteq I$. Anderson and Badawi \cite%
{AB1} generalized the notion of $2$-absorbing ideals to $n$-absorbing
ideals. A proper ideal $I$ of $R$ is called an $n$-\textit{absorbing} (resp. 
\textit{a strongly} $n$-\textit{absorbing}) \textit{ideal} if whenever $%
x_{1}\cdots x_{n+1}\in I$ for $x_{1},\dots ,x_{n+1}\in R$ (resp. $%
I_{1}\cdots I_{n+1}\subseteq I$ for ideals $I_{1},\dots ,I_{n+1}$ of $R$),
then there are $n$ of the $x_{i}$'s (resp. $n$ of the $I_{i}$'s) whose
product is in $I$. The reader is referred to \cite{BTY,BYT,BY} for more concepts related to 2-absorbing ideals.
Yousefian Darani and Soheilnia in \cite{YF} extended $2$%
-absorbing ideals to $2$-absorbing submodules. 
A proper submodule $%
N$ of $M$ is called a {\it $2$-absorbing submodule of} $M$ if whenever $abm\in N$ for some 
$a,b\in R$ and $m\in M$, then $am\in N$ or $bm\in N$ or $ab\in \left(N:_RM\right) $. 
Generally, a proper submodule $N$ of $M$ is called an $n$-{\it absorbing submodule} if 
whenever $a_1\cdots a_nm\in N$ for $a_1,\dots a_n\in R$ and $m\in M$, then either $a_1\cdots a_n\in(N :_R M)$
or there are $n-1$ of $a_i$'s whose product with $m$ is in $N$, see \cite{YF2}. Several authors investigated
properties of $2$-absorbing submodules, for example \cite{MYTY,PB}. 

In this paper we introduce the definition of classical $2$-absorbing
submodules. A proper submodule $N$ of an $R$-module $M$ is called 
{\it classical 2-absorbing submodule} if whenever
$a,b,c\in R$ and $m\in M$ with $abcm\in N$,
then $abm\in N$ or $acm\in N$
or $bcm\in N$. Clearly, every classical prime submodule is a classical 2-absorbing submodule.
We show that every Noetherian $R$-module $M$ contains a finite number of minimal classical $2$-absorbing submodules [Theorem \ref{Noetherian}]. Further, we give the relationship between classical $2$%
-absorbing submodules, classical prime submodules and $2$-absorbing
submodules [Proposition \ref{abs-class}, Proposition \ref{multiplication}]. Moreover, we characterize classical 2-absorbing submodules in [Theorem \ref{main}, Theorem \ref{main2}]. In [Theorem \ref{product1}, Theorem \ref{product3}] we investigate classical 2-absorbing submodules of a finite direct product of modules.

\bigskip

\section{Characterizations of classical 2-absorbing submodules}

First of all we give a module which has no classical 
2-absorbing submodule.
\begin{example}
Let $p$ be a fixed prime integer and $\mathbb{N}_{0}=\mathbb{N}\cup \left\{ 0\right\}.$
Then $$E\left( p\right) :=\left\{ \alpha \in \mathbb{Q}/\mathbb{Z}\mid\alpha =\frac{r}{p^{n}}+\mathbb{Z}
\text{ \ for some }r\in\mathbb{Z}\text{ and }n\in \mathbb{N}_{0}\right\}$$ is a nonzero
submodule of the $\mathbb{Z}$-module $\mathbb{Q}/\mathbb{Z}.$ For each $t\in \mathbb{N}_{0},$ set $$G_{t}:=\left\{
\alpha \in \mathbb{Q}/\mathbb{Z}\mid\alpha =\frac{r}{p^{t}}+\mathbb{Z}\text{ \ for some }r\in \mathbb{Z}\right\}.$$ 
Notice that for each $t\in \mathbb{N}_{0}$, $G_{t}$ is a submodule of $E\left( p\right) $ generated by $\frac{1}{%
p^{t}}+\mathbb{Z}$ for each $t\in \mathbb{N}_{0}.$ Each proper submodule of $E\left(
p\right) $ is equal to $G_{i}$ for some $i\in \mathbb{N}_{0}\left( \text{see, \cite[Example 7.10]{Sh}}%
\right).$ However, no $G_{t}$ is a classical $2$-absorbing submodule of $%
E\left( p\right).$ Indeed, $\frac{1}{p^{t+3}}+\mathbb{Z}\in E\left(
p\right) $. Then $p^{3}\left( \frac{1}{p^{t+3}}+\mathbb{Z}\right) =\frac{1}{p^{t}}%
+\mathbb{Z}\in G_{t}$ but $p^{2}\left( \frac{1}{p^{t+3}}+\mathbb{Z}\right) =\frac{1}{%
p^{t+1}}+\mathbb{Z}\notin G_{t}$. 
\end{example}

\begin{theorem}
\label{im} Let $f:M\to M^{\prime}$ be an epimorphism of $R$-modules.

\begin{enumerate}
\item If $N^{\prime}$ is a classical 2-absorbing submodule of $M^{\prime}$,
then $f^{-1}(N^{\prime})$ is a classical 2-absorbing submodule of $M$.

\item If $N$ is a classical 2-absorbing submodule of $M$
containing $Ker(f)$, then $f(N)$ is a classical 2-absorbing submodule of $%
M^{\prime}$.
\end{enumerate}
\end{theorem}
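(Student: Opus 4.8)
The plan is to verify both statements by directly unwinding the definition of a classical 2-absorbing submodule, exploiting that $f$ is an $R$-module homomorphism to move the relation $abcm\in N$ across $f$, and using surjectivity to realize every element of $M'$ as $f(m)$. In each part I first confirm that the candidate submodule is proper, and then check the defining trilemma.

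For part (1), I would first note that $f^{-1}(N')$ is a submodule and that it is proper: choosing $m'\in M'\setminus N'$ and a preimage $m$ with $f(m)=m'$ (which exists since $f$ is onto) gives $m\notin f^{-1}(N')$. Then, given $a,b,c\in R$ and $m\in M$ with $abcm\in f^{-1}(N')$, I translate this to $abc\,f(m)=f(abcm)\in N'$. Applying the classical 2-absorbing property of $N'$ yields $ab\,f(m)\in N'$, $ac\,f(m)\in N'$, or $bc\,f(m)\in N'$; pulling these back through $f$ gives exactly $abm\in f^{-1}(N')$, $acm\in f^{-1}(N')$, or $bcm\in f^{-1}(N')$. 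Notice that surjectivity is needed only to guarantee properness here.

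For part (2), the image $f(N)$ is a submodule because $f$ is onto. To see it is proper, I would use the hypothesis $\ker(f)\subseteq N$: if $f(N)=M'$, then for any $m\in M$ write $f(m)=f(n)$ with $n\in N$, so $m-n\in\ker(f)\subseteq N$ and hence $m\in N$, forcing $N=M$, a contradiction. For the main condition, suppose $abc\,m'\in f(N)$ with $m'\in M'$; write $m'=f(m)$ and pick $n\in N$ with $f(abcm)=f(n)$, so that $abcm-n\in\ker(f)\subseteq N$ and therefore $abcm\in N$. Now the classical 2-absorbing property of $N$ gives one of $abm,acm,bcm$ in $N$, and applying $f$ produces the corresponding membership in $f(N)$.

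The only subtle point --- and the reason the containment $\ker(f)\subseteq N$ appears in the hypothesis of (2) --- is this pulling-back step: without it, $abc\,f(m)\in f(N)$ would only guarantee $f(abcm)=f(n)$ for some $n\in N$, not $abcm\in N$ itself, and the argument would collapse. The kernel condition is exactly what makes $N$ saturated for $f$, that is $f^{-1}(f(N))=N$, which lets the relation be transported back to $M$ so that the defining property of $N$ can be applied. Everything else is a routine application of the facts that $f$ is an $R$-module homomorphism and that preimages and images of submodules are submodules.
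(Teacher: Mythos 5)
Your proof is correct and follows essentially the same route as the paper's: translate the relation across $f$ using the module homomorphism property, and in part (2) use $\ker(f)\subseteq N$ to pull $f(abcm)\in f(N)$ back to $abcm\in N$ before applying the definition. The only difference is that you also verify properness of $f(N)$ explicitly, a detail the paper's proof leaves implicit.
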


\begin{proof}
$(1)$ Since $f$ is epimorphism, $f^{-1}(N^{\prime})$ is a proper submodule of $M$.
Let $a,b,c\in R$ and $m\in M$ such that $abcm\in f^{-1}(N^{\prime})$.
Then $abcf(m)\in N^{\prime}$. Hence $%
abf(m)\in N^{\prime}$ or $acf(m)\in N^{\prime}$ or $bcf(m)\in N^{\prime}$, 
and thus $abm\in f^{-1}(N^{\prime})$ or $acm\in f^{-1}(N^{\prime})$ or 
$bcm\in f^{-1}(N^{\prime})$. So, $f^{-1}(N^{\prime})$ is a classical $2$%
-absorbing submodule of $M$.\newline
$(2)$ Let $a,b,c\in R$ and $m^{\prime}\in M^{\prime}$ be such that $abcm^{\prime}\in f(N)$.
By assumption there exists $m\in M$ such that $m^{\prime}=f(m)$ and so $%
f(abcm)\in f(N)$. Since $Ker(f)\subseteq N$, we have $abcm\in N$. It implies
that $abm\in N$ or $acm\in N$ or $bcm\in N$. Hence $abm^{\prime}\in f(N)$ or 
$acm^{\prime}\in f(N)$ or $bcm^{\prime}\in f(N)$. Consequently $f(N)$ is a classical $
2$-absorbing submodule of $M^{\prime}$.
\end{proof}

As an immediate consequence of Theorem \ref{im} we have the following
corollary.

\begin{corollary}
\label{quo} Let $M$ be an $R$-module and $L\subseteq N$ be submodules of $M$%
. Then $N$ is a classical $2$-absorbing submodule of $M$ if and only if $N/L$ is a classical
2-absorbing submodule of $M/L$.
\end{corollary}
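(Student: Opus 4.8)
The plan is to deduce everything from Theorem \ref{im} applied to a single, canonical map: the quotient projection $\pi\colon M\to M/L$ given by $\pi(m)=m+L$. This is an $R$-module epimorphism with $\mathrm{Ker}(\pi)=L$, and both implications of the corollary correspond to the two parts of Theorem \ref{im}. So rather than unwinding the defining condition $abcm\in N$ directly, I would simply invoke the transfer results already established.

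For the forward direction, I would assume $N$ is a classical $2$-absorbing submodule of $M$. Since $L\subseteq N$ by hypothesis, we have $\mathrm{Ker}(\pi)=L\subseteq N$, so Theorem \ref{im}(2) applies and yields that $\pi(N)=N/L$ is a classical $2$-absorbing submodule of $M/L$. For the converse, I would assume $N/L$ is a classical $2$-absorbing submodule of $M/L=\pi(M)$; then Theorem \ref{im}(1) gives that $\pi^{-1}(N/L)$ is a classical $2$-absorbing submodule of $M$, and it remains only to identify this preimage with $N$.

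The one point requiring a word of justification is the equality $\pi^{-1}(N/L)=N$. If $m\in\pi^{-1}(N/L)$, then $m+L=n+L$ for some $n\in N$, so $m-n\in L\subseteq N$ and hence $m\in N$; the reverse inclusion $N\subseteq\pi^{-1}(N/L)$ is immediate. This identity is the crux of the whole argument, and it rests entirely on the assumption $L\subseteq N$. Everything else is bookkeeping, so I do not anticipate a genuine obstacle; the proof is essentially an application of Theorem \ref{im} to $\pi$ together with this identification.
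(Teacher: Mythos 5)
Your proof is correct and is exactly the argument the paper intends: the corollary is stated there as an immediate consequence of Theorem \ref{im}, obtained by applying both parts of that theorem to the canonical projection $\pi\colon M\to M/L$ with $\mathrm{Ker}(\pi)=L\subseteq N$, together with the identification $\pi^{-1}(N/L)=N$. Your write-up simply makes these routine details explicit, so there is nothing to add.
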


\begin{proposition}\label{intersection}
Let $M$ be an $R$-module and $N_1,~N_2$ be classical prime submodules of $M$.
Then $N_1\cap N_2$ is a classical 2-absorbing submodule of $M$.
\end{proposition}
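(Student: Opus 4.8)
The plan is to verify the defining condition directly. Let $N_1,N_2$ be classical prime submodules and set $N=N_1\cap N_2$. First I note that $N$ is proper (since each $N_i$ is). Now I would take arbitrary $a,b,c\in R$ and $m\in M$ with $abcm\in N$, which means $abcm\in N_1$ and $abcm\in N_2$, and I must show that one of $abm$, $acm$, $bcm$ lies in $N=N_1\cap N_2$.

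The key idea is to exploit the classical prime property of each $N_i$, but applied carefully by \emph{grouping} the three scalars into a product of two. For $N_1$, from $(ab)(c)m\in N_1$ the classical prime condition gives $abm\in N_1$ or $cm\in N_1$. This branching, done for both $N_1$ and $N_2$ and possibly with different groupings, produces several cases. In each case I expect to be able to push a single factor further: for instance, if I land on $cm\in N_1$, I can then reconsider a product like $(a)(c)m$ or use $cm\in N_1\Rightarrow acm\in N_1$ and $bcm\in N_1$ directly, since $N_1$ is a submodule closed under scalar multiplication. The real content is to arrange the case analysis so that, after combining the conclusions coming from $N_1$ and from $N_2$, the \emph{same} one of the three products $abm,acm,bcm$ lands in both $N_1$ and $N_2$ simultaneously.

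Concretely I would proceed as follows. Applying the classical prime property to $N_1$ on the grouping $(ab)(c)m\in N_1$ yields $abm\in N_1$ or $cm\in N_1$; in the latter subcase $cm\in N_1$ immediately forces $acm\in N_1$ and $bcm\in N_1$. So from $N_1$ alone we learn that at least two (indeed, in the worst case, one) of the three candidate products lie in $N_1$. Doing the symmetric analysis for $N_2$, I similarly obtain that a correspondingly large subset of $\{abm,acm,bcm\}$ lies in $N_2$. The crux is a pigeonhole-type observation: since from each $N_i$ we can guarantee membership of a product indexed by a $2$-element subset of $\{a,b,c\}$ (or better), two such guarantees among three candidates must overlap, so some common product $xym$ lies in both $N_1$ and $N_2$, hence in $N$. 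I would make this rigorous by explicitly enumerating the (few) cases arising from the two binary branchings and checking overlap in each.

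The main obstacle will be the bookkeeping of the case analysis and ensuring the overlap argument is watertight rather than merely plausible: it is tempting to believe the two subsets must intersect, but I must confirm that the subsets produced really do each contain a product, and handle the degenerate subcases (e.g. when $cm\in N_1$ but the analogous factor for $N_2$ is different) so that the guaranteed memberships genuinely share a common index pair. Once the cases are laid out—there are only a handful—the verification in each is a one-line application of submodule closure, so the difficulty is organizational rather than technical.
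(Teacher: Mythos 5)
Your overall strategy---branch using classical primeness of each $N_i$, then locate a common product among $abm,acm,bcm$---is workable, but the central pigeonhole claim is false as you state it, and this is a genuine gap rather than bookkeeping. From the grouping $(ab)(c)m\in N_i$ you get either $abm\in N_i$ (one guaranteed product) or $cm\in N_i$ (hence the two products $acm,bcm\in N_i$). In the mixed case, say $abm\in N_1$ and $cm\in N_2$, the guaranteed sets are $\{abm\}$ and $\{acm,bcm\}$, which are \emph{disjoint}: one guaranteed product in each $N_i$ among three candidates does not force an overlap. Moreover, submodule closure applied to $abm\in N_1$ only produces multiples of $abm$, none of which is $acm$ or $bcm$, so the tool you allot yourself in each case (``a one-line application of submodule closure'') cannot close this case. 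The difficulty here is technical, not organizational.

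The missing idea is a \emph{second} application of the classical prime hypothesis inside the problematic branch: from $abm\in N_1$, classical primeness of $N_1$ gives $am\in N_1$ or $bm\in N_1$. Once every branch is pushed down to a single letter, you have $xm\in N_1$ and $ym\in N_2$ for some $x,y\in\{a,b,c\}$; then the two candidate products containing $x$ lie in $N_1$, the two containing $y$ lie in $N_2$, and two $2$-element subsets of a $3$-element set must meet---that is the correct pigeonhole. This is exactly the paper's (terse) proof: after iterating classical primeness one may assume $am\in N_1$ and $bm\in N_2$, whence $abm=b(am)=a(bm)\in N_1\cap N_2$. Your phrase ``push a single factor further'' gestures at this, but your only explicit instance of it is in the harmless branch $cm\in N_1$; for the branch $abm\in N_1$---the one where the overlap actually fails---the second application of classical primeness is never stated, and without it your case analysis does not close.
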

\begin{proof}
Let for some $a,b,c\in R$ and $m\in M$, $abcm\in N_1\cap N_2$. Since
$N_1$ is a classical prime submodule, then we may assume that $am\in N_1$.
Likewise, assume that $bm\in N_2$. Hence $abm\in N_1\cap N_2$ which 
implies $N_1\cap N_2$ is a classical 2-absorbing submodule.
\end{proof}

\begin{proposition}\label{abs-class}
Let $N$ be a proper submodule of an $R$-module $M$. 
\begin{enumerate}
\item If $N$ is a 2-absorbing submodule of $M$, then $N$ is a classical 2-absorbing submodule of $M$.

\item $N$ is a classical prime submodule of $M$ if and only if $N$ is a 2-absorbing submodule of $M$
and $(N:_RM)$ is a prime ideal of $R$.
\end{enumerate}
\end{proposition}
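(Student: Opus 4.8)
The plan is to handle the two parts separately, in each case just unwinding the definitions, with the key leverage being that membership in the residual $(N:_RM)$ forces a module-level containment $aM\subseteq N$.

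For part (1), I would take $abcm\in N$ with $a,b,c\in R$ and $m\in M$, regroup it as $ab\cdot(cm)\in N$, and apply the $2$-absorbing hypothesis to the ring elements $a,b$ and the module element $cm$. This yields $a(cm)\in N$, or $b(cm)\in N$, or $ab\in(N:_RM)$. The first two cases are literally $acm\in N$ and $bcm\in N$; in the third, $ab\in(N:_RM)$ gives $abM\subseteq N$, hence $abm\in N$. So one of the three required containments always holds and $N$ is classical $2$-absorbing.

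For the forward direction of part (2), assume $N$ is classical prime. That $N$ is $2$-absorbing is immediate: from $abm\in N$ the classical prime property already delivers $am\in N$ or $bm\in N$, which is stronger than the $2$-absorbing requirement. The substantive step is primeness of $(N:_RM)$. First note $(N:_RM)\neq R$ since $N$ is proper. Then I would take $ab\in(N:_RM)$ and argue by contradiction: if neither $a$ nor $b$ lies in $(N:_RM)$, pick $m_1$ with $am_1\notin N$ and $m_2$ with $bm_2\notin N$. Applying the classical prime property to $abm_1\in N$ and to $abm_2\in N$ forces $bm_1\in N$ and $am_2\in N$ respectively. The crucial trick is to feed the sum $m_1+m_2$ into the hypothesis: from $ab(m_1+m_2)\in N$ one gets $a(m_1+m_2)\in N$ or $b(m_1+m_2)\in N$, and each alternative contradicts one of the memberships just established (using that $N$ is closed under subtraction, e.g. $a(m_1+m_2)\in N$ together with $am_2\in N$ forces $am_1\in N$). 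This is the main obstacle, since the contradiction is invisible from a single module element and genuinely requires combining $m_1$ and $m_2$.

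For the reverse direction, assume $N$ is $2$-absorbing and $(N:_RM)$ is prime, and suppose $abm\in N$. The $2$-absorbing property gives $am\in N$, or $bm\in N$, or $ab\in(N:_RM)$. The first two cases are done. In the last case primeness of $(N:_RM)$ yields $a\in(N:_RM)$ or $b\in(N:_RM)$, and the corresponding containment $aM\subseteq N$ or $bM\subseteq N$ forces $am\in N$ or $bm\in N$. Hence $N$ is classical prime, completing the equivalence.
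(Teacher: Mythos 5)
Your proposal is correct, and parts of it coincide with the paper's argument: your proof of (1) (regrouping $abcm$ as $ab\cdot(cm)$ and converting $ab\in(N:_RM)$ into $abm\in N$) and your proof of the reverse direction of (2) are exactly what the paper does. The difference is in the forward direction of (2): where you must show that a classical prime submodule $N$ has $(N:_RM)$ prime, the paper does not argue this at all but simply cites an external result (Lemma 2.1 of Azizi's paper on prime and weakly prime submodules), whereas you give a self-contained proof via the sum trick --- choosing $m_1$ with $am_1\notin N$ and $m_2$ with $bm_2\notin N$, deducing $bm_1\in N$ and $am_2\in N$, and then feeding $m_1+m_2$ into the classical prime hypothesis to reach a contradiction in both branches. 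Your argument is valid (the subtraction steps use only that $N$ is a submodule), and it is essentially the standard proof of the cited lemma; what it buys is independence from the reference, at the cost of length, while the paper's citation keeps the proposition's proof to a few lines. Both routes establish the statement.
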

\begin{proof}
(1) Assume that $N$ is a 2-absorbing submodule of $M$. Let $a,b,c\in R$ and $m\in M$
such that $abcm\in N$. Therefore either $acm\in N$ or $bcm\in N$ or $ab\in(N:M)$.
The first two cases lead us to the claim. In the third case we have that $abm\in N$.
Consequently $N$ is a classical 2-absorbing submodule.

(2) It is evident that if $N$ is classical prime, then it is 2-absorbing. 
Also, \cite[Lemma 2.1]{A1} implies that $(N:_RM)$ is a prime ideal of $R$. Assume that
$N$ is a 2-absorbing submodule of $M$ and $(N:_RM)$ is a prime ideal of $R$. 
Let $abm\in N$ for some $a,b\in R$ and $m\in M$ such that neither $am\in N$ nor $bm\in N$. 
Then $ab\in(N:_RM)$ and so either $a\in(N:_RM)$ or $b\in(N:_RM)$.This contradiction 
shows that $N$ is classical prime.
\end{proof}

The following example shows that the converse of Proposition \ref{abs-class}(1) is not true.
\begin{example}
Let $R=\mathbb{Z}$ and $M=\mathbb{Z}_p\bigoplus\mathbb{Z}_q\bigoplus\mathbb{Q}$ 
where $p,~q$ are two distinct prime integers. One can easily see that the zero submodule of $M$
is a classical 2-absorbing submodule. Notice that $pq(1,1,0)=(0,0,0)$, but $p(1,1,0)\not=(0,0,0)$,
$q(1,1,0)\not=(0,0,0)$ and $pq(1,1,1)\not=0$. So the zero submodule of $M$ is not 2-absorbing. 
Also, part (2) of Proposition \ref{abs-class} shows that the zero submodule is not a classical prime
submodule. Hence the two concepts of classical prime submodules and of classical 2-absorbing submodules
are different in general.
\end{example}

Let $M$ be an $R$-module and $N$ a submodule of $M$. For every $a\in R$, $\{m\in M\mid am\in N\}$ is denoted by $(N:_R a)$. It is easy to see that $(N:_Ma)$ is a submodule of $M$ containing
$N$.

\begin{theorem}\label{main}
Let $M$ be an $R$-module and $N$ be a proper submodule of $M$.
The following conditions are equivalent:
\begin{enumerate}
\item $N$ is classical 2-absorbing;

\item For every $a,b,c\in R$, $(N:_Mabc)=(N:_Mab)\cup(N:_Mac)\cup(N:_Mbc)$;

\item For every $a,b\in R$ and $m\in M$ with $abm\notin N$, $(N:_Rabm)=(N:_Ram)\cup(N:_Rbm)$;

\item For every $a,b\in R$ and $m\in M$ with $abm\notin N$, $(N:_Rabm)=(N:_Ram)$
or $(N:_Rabm)=(N:_Rbm)$;

\item For every $a,b\in R$ and every ideal $I$ of $R$ and $m\in M$ with $abIm\subseteq N$, 
either $abm\in N$ or $aIm\subseteq N$ or $bIm\subseteq N$;

\item For every $a\in R$ and every ideal $I$ of $R$ and $m\in M$ with $aIm\nsubseteq N$, $(N:_RaIm)=(N:_Ram)$
or $(N:_RaIm)=(N:_RIm)$;

\item For every $a\in R$ and every ideals $I,~J$ of $R$ and $m\in M$ with $aIJm\subseteq N$, either
$aIm\subseteq N$ or $aJm\subseteq N$ or $IJm\subseteq N$;

\item For every ideals $I,~J$ of $R$ and $m\in M$ with $IJm\nsubseteq N$, $(N:_RIJm)=(N:_RIm)$
or $(N:_RIJm)=(N:_RJm)$;

\item For every ideals $I,J,K$ of $R$ and $m\in M$ with $IJKm\subseteq N$, either
$IJm\subseteq N$ or $IKm\subseteq N$ or $JKm\subseteq N$;

\item For every $m\in M\backslash N$, $(N:_Rm)$ is a 2-absorbing ideal of $R$. 
\end{enumerate}
\end{theorem}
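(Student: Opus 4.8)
The plan is to prove all ten statements equivalent by threading them onto a single backbone cycle and attaching the remaining ones as local reformulations, using only two elementary tools plus one imported theorem. The first tool is that every colon set occurring here is built by enlarging a product, so the ``obvious'' inclusions are automatic: because $N$ is a submodule, $(N:_Mab)\cup(N:_Mac)\cup(N:_Mbc)\subseteq(N:_Mabc)$, and similarly $(N:_Ram)\cup(N:_Rbm)\subseteq(N:_Rabm)$, $(N:_Ram)\cup(N:_RIm)\subseteq(N:_RaIm)$, and $(N:_RIm)\cup(N:_RJm)\subseteq(N:_RIJm)$; hence in each displayed equality only the reverse inclusion carries information. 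The second tool is the elementary lemma that an ideal equal to the union of two subideals coincides with one of them (an abelian group is never the union of two proper subgroups). The genuine passage from elements to ideals will be imported wholesale from Badawi's characterization of $2$-absorbing ideals quoted in the introduction.

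First I would settle the element-level statements $(1),(2),(3),(4),(10)$. Conditions $(1)$, $(2)$, $(3)$ are one assertion written three ways: by the nestedness above, $(2)$ is just the reverse inclusion $(N:_Mabc)\subseteq(N:_Mab)\cup(N:_Mac)\cup(N:_Mbc)$, which unwinds verbatim to $(1)$, while $(3)$ is the same reverse inclusion read inside $R$ for a fixed $m$ with $abm\notin N$. The equivalence $(3)\Leftrightarrow(4)$ is then immediate from the union lemma applied to the ideal $(N:_Rabm)$. Finally $(1)\Leftrightarrow(10)$ is the translation ``$abcm\in N$'' $\leftrightarrow$ ``$abc\in(N:_Rm)$'': for $m\in N$ condition $(1)$ is vacuous, and for $m\notin N$ it says precisely that the proper ideal $(N:_Rm)$ is $2$-absorbing.

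It remains to attach the ideal-valued conditions, and the only step with real content is $(10)\Rightarrow(9)$. I would prove it by fixing $m$: if $m\in N$ then $IJm\subseteq N$ trivially, while if $m\notin N$ then $IJKm\subseteq N$ reads $IJK\subseteq(N:_Rm)$, so Badawi's three-ideal property for the $2$-absorbing ideal $(N:_Rm)$ yields $IJ\subseteq(N:_Rm)$ or $IK\subseteq(N:_Rm)$ or $JK\subseteq(N:_Rm)$, which is exactly $(9)$. The descending implications $(9)\Rightarrow(7)\Rightarrow(5)\Rightarrow(1)$ are pure specialization: one replaces a single ideal by a principal ideal $Rx$ and uses the submodule identities $Rx\cdot S\cdot m=xSm$ and $Rxm\subseteq N\Leftrightarrow xm\in N$; for instance $K=Ra$ turns $(9)$ into $(7)$, and $I=Rc$ turns $(5)$ into $(1)$. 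This closes the cycle $(1)\Rightarrow(10)\Rightarrow(9)\Rightarrow(7)\Rightarrow(5)\Rightarrow(1)$, so $(1),(5),(7),(9),(10)$ are equivalent.

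The two leftover colon-equality conditions are grafted on by the union-lemma mechanism already used for $(3)\Leftrightarrow(4)$. Assuming $(5)$ and taking $b\in(N:_RaIm)$ forces, via $(5)$ together with $aIm\nsubseteq N$, that $b\in(N:_Ram)\cup(N:_RIm)$; hence $(N:_RaIm)=(N:_Ram)\cup(N:_RIm)$ and the lemma gives $(6)$, while reading the chain backwards gives $(6)\Rightarrow(5)$. The identical argument with the leading scalar suppressed yields $(7)\Leftrightarrow(8)$. Thus the main obstacle is isolated in $(10)\Rightarrow(9)$; everything else is nestedness of colons, the union lemma, and principal-ideal specialization. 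Were a self-contained proof desired, that obstacle would resurface as the element-to-ideal lifting, to be established directly by Badawi's covering argument instead of by citation.
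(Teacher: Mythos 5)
Your proof is correct, but it takes a genuinely different route from the paper's. The paper proves the single linear chain $(1)\Rightarrow(2)\Rightarrow(3)\Rightarrow(4)\Rightarrow(5)\Rightarrow(6)\Rightarrow(7)\Rightarrow(8)\Rightarrow(9)\Rightarrow(1)$ (with $(9)\Leftrightarrow(10)$ noted as straightforward), upgrading one element to an ideal at each stage using nothing but the union lemma, repeated: each colon-equality condition is derived from the preceding element/ideal condition and then immediately converted into the next containment condition. In particular the paper never invokes Badawi's theorem; the entire element-to-ideal lifting is re-proved from scratch at the module level. You instead collapse $(1),(2),(3),(4)$ into one another as reformulations, run the cycle $(1)\Rightarrow(10)\Rightarrow(9)\Rightarrow(7)\Rightarrow(5)\Rightarrow(1)$, and outsource the only substantive step $(10)\Rightarrow(9)$ to Badawi's result (\cite{B}, quoted in the introduction) that a $2$-absorbing ideal satisfies the three-ideal property $I_1I_2I_3\subseteq I\Rightarrow I_iI_j\subseteq I$; conditions $(6)$ and $(8)$ are then grafted on by the same union-lemma mechanism, and the downward implications are principal-ideal specializations. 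All of these steps check out (the colon inclusions, the union lemma applications, and the specializations $K=Ra$, $J=Rb$, $I=Rc$ are all valid). What your route buys is brevity and modularity: the genuine difficulty is isolated in one citation, and everything else is bookkeeping. What the paper's route buys is self-containment: combined with the corollary immediately following the theorem (which identifies classical $2$-absorbing submodules of $R$ with $2$-absorbing ideals via $(I:_R1)=I$), the paper's bootstrap actually re-derives Badawi's three-ideal theorem as the special case $M=R$, $m=1$ of condition $(9)$. Your proof cannot serve that purpose---using it to establish Badawi's theorem would be circular---a dependence you correctly flag in your closing sentence.
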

\begin{proof}
(1)$\Rightarrow$(2) Suppose that $N$ is a classical $2$-absorbing submodule of $M$. Let $m\in
\left( N:_Mabc\right)$. Then $abcm\in N$. Hence $abm\in N$ or $acm\in N$ or $%
bcm\in N$. Therefore $m\in\left( N:_Mab\right)$ or $m\in\left( N:_Mac\right)$
or $m\in\left( N:_Mbc\right)$. Consequently, $\left(N:_Mabc\right)=\left(N:_Mab\right)\cup\left( N:_Mac\right)\cup\left( N:_Mbc\right)$.\newline
(2)$\Rightarrow$(3) 
Let $abm\notin N$ for some $a,b\in R$ and $m\in M$. Assume that $x\in(N:_Rabm)$.
Then $abxm\in N$, and so $m\in(N:_Mabx)$. Since $abm\notin N$, $m\notin(N:_Mab)$.
Thus by part (1), $m\in(N:_Max)$ or $m\in(N:_Mbx)$, whence $x\in(N:_Ram)$
or $x\in(N:_Rbm)$. Therefore $(N:_Rabm)=(N:_Ram)\cup(N:_Rbm)$.\newline
(3)$\Rightarrow$(4) By the fact that if an ideal (a subgroup) is the union
of two ideals (two subgroups), then it is equal to one of them.\newline
(4)$\Rightarrow$(5) Let for some $a,b\in R$, an ideal $I$ of $R$ and $m\in M$, $abIm\subseteq N$.
Hence $I\subseteq(N:_Rabm)$. If $abm\in N$, then we are done. Assume that $abm\notin N$.
Therefore by part (4) we have that $I\subseteq(N:_Ram)$ or $I\subseteq(N:_Rbm)$, i.e., $aIm\subseteq N$
or $bIm\subseteq N$.\newline
(5)$\Rightarrow$(6)$\Rightarrow$(7)$\Rightarrow$(8)$\Rightarrow$(9) Have proofs similar to that of the previous implications.\newline
(9)$\Rightarrow$(1) Is trivial.\newline
(9)$\Leftrightarrow$(10) Straightforward.
\end{proof}

\begin{corollary}
Let $R$ be a ring and $I$ be a proper ideal of $R$.
\begin{enumerate}
\item $_{R}I$ is a classical 2-absorbing submodule of $R$ if and only if $I$ is a 2-absorbing ideal of $R$. 

\item Every proper ideal of $R$ is 2-absorbing if and only if for every $R$-module $M$
and every proper submodule $N$ of $M$, $N$ is a classical 2-absorbing submodule of $M$.
\end{enumerate} 
\end{corollary}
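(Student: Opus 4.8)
The statement has two parts, and the plan is to reduce each to results already established. For part (1), I would apply Theorem~\ref{main}, specifically the equivalence (1)$\Leftrightarrow$(10): the module $N$ is a classical 2-absorbing submodule of ${}_RR$ precisely when $(N:_Rm)$ is a 2-absorbing ideal of $R$ for every $m\in R\setminus N$. The key observation is that when $M=R$ we may specialize to $m=1$. First I would note that $1\in R\setminus I$ since $I$ is proper, so the condition in (10) forces $(I:_R 1)=I$ itself to be a 2-absorbing ideal. Conversely, if $I$ is a 2-absorbing ideal, I would verify the defining property of a classical 2-absorbing submodule directly: given $abcm\in I$ with $a,b,c\in R$ and $m\in R$, the product $(am)(bm)(cm)$ lands in $I$ after absorbing $m$'s, but more cleanly, $a,b,(cm)\in R$ with $ab(cm)\in I$ gives $ab\in I$ or $a(cm)\in I$ or $b(cm)\in I$ by the 2-absorbing property, and in each case multiplying by $m$ (or $1$) yields one of $abm\in I$, $acm\in I$, $bcm\in I$. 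So both directions are short once Theorem~\ref{main} is invoked.

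For part (2), the forward direction is the easier half: assuming every proper ideal of $R$ is 2-absorbing, I would take an arbitrary $R$-module $M$, a proper submodule $N$, and an arbitrary $m\in M\setminus N$. The submodule quotient $(N:_Rm)=\{r\in R\mid rm\in N\}$ is a proper ideal of $R$ (proper because $m\notin N$ means $1\notin(N:_Rm)$), hence 2-absorbing by hypothesis. Since this holds for every such $m$, condition (10) of Theorem~\ref{main} is satisfied, so $N$ is classical 2-absorbing.

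For the reverse direction of part (2), I would argue contrapositively or by a direct specialization. Suppose every proper submodule of every $R$-module is classical 2-absorbing; I want to show every proper ideal $I$ of $R$ is 2-absorbing. The natural move is to take $M=R$ and $N=I$: then $I$ is a proper submodule of the $R$-module $R$, hence classical 2-absorbing by hypothesis, and part~(1) of this corollary immediately gives that $I$ is a 2-absorbing ideal. This makes part~(2)'s reverse implication a one-line consequence of part~(1).

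The main obstacle, such as it is, lies in the converse of part~(1)---checking that a 2-absorbing ideal yields a classical 2-absorbing submodule when we view $I$ inside ${}_RR$. The subtlety is that in the module definition we have an element $m\in R$ floating around in $abcm\in I$, whereas the ideal definition only sees three ring elements. The clean resolution is to regard $cm$ (or $bm$, $am$) as a single ring element and apply the 2-absorbing property to the triple $(a,b,cm)$, then reinsert the factor $m$; one must check all the resulting cases collapse correctly into the three required conclusions, but this is routine bookkeeping rather than a genuine difficulty. Alternatively, invoking (1)$\Leftrightarrow$(10) of Theorem~\ref{main} with the specialization $m=1$ sidesteps the element-manipulation entirely, which is the route I would prefer to present.
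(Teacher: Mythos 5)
Your main line of argument is correct and is essentially the paper's: the forward direction of (1) is Theorem~\ref{main} (condition (10)) evaluated at $m=1$, your direct $(a,b,cm)$-manipulation for the converse is precisely the content of Proposition~\ref{abs-class}(1) specialized to $M={}_RR$ (which is what the paper cites instead of reproving), and your part (2) — condition (10) via properness of $(N:_Rm)$ for the forward direction, and $M=R$, $N=I$ plus part (1) for the reverse — matches the paper's proof exactly.

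However, the route you say you would \emph{prefer} to present contains a genuine gap. You claim that invoking (1)$\Leftrightarrow$(10) of Theorem~\ref{main} ``with the specialization $m=1$'' handles the converse of part (1) and ``sidesteps the element-manipulation entirely.'' It does not: to conclude from (10) that $I$ is a classical 2-absorbing submodule of ${}_RR$, you must verify that $(I:_Rm)$ is a 2-absorbing ideal for \emph{every} $m\in R\setminus I$, not just for $m=1$. Knowing only that $(I:_R1)=I$ is 2-absorbing is literally your hypothesis and establishes nothing beyond it, so this version of the converse is circular. Proving that $(I:_Rm)$ is 2-absorbing for all $m\notin I$ is true, but the proof of that fact is exactly the $ab(cm)\in I$ manipulation you were trying to avoid. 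So in a final write-up you should either keep your direct verification or cite Proposition~\ref{abs-class}(1) as the paper does; the ``preferred'' shortcut, as stated, does not close the converse direction.
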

\begin{proof}
(1) Let $I$ be a classical 2-absorbing submodule of $R$. Then by Theorem \ref{main},
$(I:_R1)=I$ is a 2-absorbing ideal of $R$. For the converse see part (1) of Proposition \ref{abs-class}.

(2) Assume that every proper ideal of $R$ is 2-absorbing.
Let $N$ be a proper submodule of an $R$-module $M$.
Since for every $m\in M\backslash N$, $(N:_Rm)$ is a proper ideal of $R$, then it is a 2-absorbing ideal of $R$. 
Hence by Theorem \ref{main}, $N$ is a classical 2-absorbing submodule of $M$. We have the converse immediately 
by part (1).
\end{proof}

\begin{proposition}\label{inter}
Let $M$ be an $R$-module and $\left\{ K_{i}\mid i\in I\right\}$ be a chain of classical 2-absorbing submodules of $M$.
Then $\cap_{i\in I}K_{i}$ is a classical 2-absorbing submodule of $M$.
\end{proposition}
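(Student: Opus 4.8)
The plan is to set $K := \bigcap_{i\in I}K_i$ and argue directly from the definition of classical $2$-absorbing submodule. First I would check that $K$ is proper: since $K\subseteq K_{i_0}$ for any fixed $i_0\in I$ and $K_{i_0}\neq M$, we get $K\neq M$. So it remains only to verify the absorbing condition for $K$.

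Suppose $a,b,c\in R$ and $m\in M$ satisfy $abcm\in K$, and assume toward a contradiction that $abm\notin K$, $acm\notin K$ and $bcm\notin K$. Each of these three failures is witnessed by an index: there exist $i_1,i_2,i_3\in I$ with $abm\notin K_{i_1}$, $acm\notin K_{i_2}$ and $bcm\notin K_{i_3}$.

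The key step exploits the chain hypothesis. Since $\{K_i\mid i\in I\}$ is totally ordered by inclusion, the three submodules $K_{i_1},K_{i_2},K_{i_3}$ are pairwise comparable, so there is a smallest one among them; call it $K_j$. Because $K\subseteq K_j$ and $abcm\in K$, we have $abcm\in K_j$, and as $K_j$ is classical $2$-absorbing it follows that $abm\in K_j$ or $acm\in K_j$ or $bcm\in K_j$. But $K_j\subseteq K_{i_1}$, $K_j\subseteq K_{i_2}$ and $K_j\subseteq K_{i_3}$, so whichever of the three products lies in $K_j$ also lies in the corresponding $K_{i_k}$, contradicting its choice. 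Hence at least one of $abm,acm,bcm$ belongs to $K$, and $K$ is classical $2$-absorbing.

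The only point requiring a little care, though it is entirely routine, is the extraction of the smallest of the three witnessing submodules from the chain together with the observation that the full intersection $K$ lies below it; this is precisely what allows the $2$-absorbing property of $K_j$ to be applied to $abcm$. No deeper obstacle arises, and it is the finiteness of the number of alternatives in the definition that makes this chain argument go through cleanly.
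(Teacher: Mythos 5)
Your proof is correct and rests on the same key idea as the paper's: using the chain hypothesis to find a common lower bound among the finitely many witnessing submodules and applying the classical $2$-absorbing property of that member, to which the full intersection $abcm$ belongs. The only difference is organizational — you argue by contradiction with three witnesses and take their minimum, while the paper argues directly with two witnesses — and your version is, if anything, slightly cleaner.
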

\begin{proof}
Suppose that $abcm\in \cap_{i\in I}K_{i}$ for some $a,b,c\in R$ and $m\in M$. Aassume that 
$abm\notin\cap_{i\in I}K_{i}$ $\ $and $acm\notin \cap_{i\in I}K_{i}$. Then there are $t,~l\in I$ where $abm\notin K_{t}$ and $acm\notin K_{l}$.
Hence, for every $K_{s}\subseteq K_{t}$ and every $K_{d}\subseteq K_{l}$ we have that $abm\notin K_{s}$ and $acm\notin K_{d}$. Thus, for every submodule
$K_h$ such that $K_{h}\subseteq K_{t}$ and $K_{h}\subseteq
K_{l}$ we get $bcm\in K_{h}$. Hence $bcm\in\cap_{i\in I}K_{i}$.
\end{proof}

A classical $2$-absorbing submodule of $M$ is called {\it minimal}, if for any
classical $2$-absorbing submodule $K$ of $M$ such that $K\subseteq N$, then $K=N$%
. Let $L$ be a classical $2$-absorbing submodule of $M$. Set $$\Gamma
=\left\{ K\mid K\text{ is a classical }2\text{-absorbing submodule of }M\text{ 
and }K\subseteq L\right\}.$$ If $\left\{ K_{i}:i\in I\right\} $ is any
chain in $\Gamma $, then $\cap _{i\in I}K_{i}$ \ is in $\Gamma $, by Proposition \ref{inter}.
By Zorn's Lemma, $\Gamma $ contains a minimal member which is clearly a minimal
classical $2$-absorbing submodule of $M$. Thus, every classical $2$%
-absorbing submodule of $M$ contains a minimal classical $2$-absorbing
submodule of $M$. If $M$ is a finitely generated, then it is clear that $M$
contains a minimal classical $2$-absorbing submodule.

\begin{theorem}\label{Noetherian}
\bigskip Let $M$ be a Noetherian $R$-module. Then $M$ contains a finite
number of minimal classical $2$-absorbing submodules.
\end{theorem}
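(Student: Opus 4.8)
The plan is to argue by Noetherian induction on submodules, introducing for each submodule $L$ the family of classical $2$-absorbing submodules lying above $L$. Precisely, for a submodule $L\subseteq M$ let $\mathrm{Min}(L)$ denote the set of classical $2$-absorbing submodules of $M$ that contain $L$ and are minimal, with respect to inclusion, among all such submodules. Taking $L=0$, the set $\mathrm{Min}(0)$ is exactly the collection of minimal classical $2$-absorbing submodules of $M$, so it suffices to prove that $\mathrm{Min}(0)$ is finite. Existence of minimal members below any given classical $2$-absorbing submodule is guaranteed by Proposition \ref{inter} together with Zorn's Lemma, exactly as in the paragraph preceding the theorem.

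Suppose to the contrary that $\mathrm{Min}(0)$ is infinite, and set $\Sigma=\{L\subseteq M : \mathrm{Min}(L)\text{ is infinite}\}$. Then $0\in\Sigma$, so $\Sigma\neq\emptyset$, and since $M$ is Noetherian $\Sigma$ has a maximal member $N$. First I would rule out that $N$ is itself classical $2$-absorbing: if it were, then $N$ would be the least element of the family of classical $2$-absorbing submodules containing $N$, whence $\mathrm{Min}(N)=\{N\}$, contradicting $N\in\Sigma$. Hence $N$ is not classical $2$-absorbing, so by definition there exist $a,b,c\in R$ and $m\in M$ with $abcm\in N$ but $abm\notin N$, $acm\notin N$, and $bcm\notin N$.

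The crux of the argument is then a covering step. Put $N_1=N+Rabm$, $N_2=N+Racm$, and $N_3=N+Rbcm$; each strictly contains $N$ by the choice of $a,b,c,m$, so by maximality of $N$ none lies in $\Sigma$, i.e.\ each $\mathrm{Min}(N_i)$ is finite. I claim $\mathrm{Min}(N)\subseteq \mathrm{Min}(N_1)\cup\mathrm{Min}(N_2)\cup\mathrm{Min}(N_3)$. Indeed, if $K\in\mathrm{Min}(N)$, then $K$ is classical $2$-absorbing and $abcm\in N\subseteq K$, so $abm\in K$ or $acm\in K$ or $bcm\in K$; in the first case $K\supseteq N+Rabm=N_1$, and any classical $2$-absorbing submodule $K'$ with $N_1\subseteq K'\subseteq K$ also satisfies $N\subseteq K'\subseteq K$, forcing $K'=K$ by minimality of $K$ in $\mathrm{Min}(N)$, so $K\in\mathrm{Min}(N_1)$; the other two cases are symmetric. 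This exhibits $\mathrm{Min}(N)$ as a finite union of finite sets, hence finite, contradicting $N\in\Sigma$. Therefore $\Sigma=\emptyset$ and $\mathrm{Min}(0)$ is finite.

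The step I expect to require the most care is the verification that membership in $\mathrm{Min}(N)$ is transported into the $\mathrm{Min}(N_i)$, that is, that a submodule minimal over $N$ remains minimal once the base is enlarged to $N_i$. This is where the conditional-minimality bookkeeping matters, and it hinges on the defining classical $2$-absorbing property of $K$ to split $abcm\in K$ into one of the three membership cases; note also that the case $N_i=M$ is harmless, since then no proper $K$ can contain $N_i$ and that branch simply does not arise. Everything else is the routine Noetherian maximal-element machinery.
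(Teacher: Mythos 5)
Your proof is correct and is essentially the paper's own argument: the paper likewise takes a maximal member $T$ of the family of submodules having infinitely many minimal classical $2$-absorbing submodules above them (phrased there via quotients $M/T$ and Corollary \ref{quo}), uses the failure of the classical $2$-absorbing property at $T$ to produce three strictly larger submodules, and observes that every minimal classical $2$-absorbing submodule over $T$ remains minimal over one of the three, giving the same finite-covering contradiction. The only cosmetic differences are that you work with elements $a,b,c$ and cyclic enlargements $N+Rabm$ where the paper uses ideals $I,J,K$ and $T+IJm$, and you track relative minimality inside $M$ rather than passing to quotient modules.
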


\begin{proof}
Suppose that the result is false. Let $\Gamma $ denote the collection of
proper submodules $N$ of $M$ such that the module $M/N$ has an infinite
number of minimal classical $2$-absorbing submodules. Since $0\in \Gamma $
we get $\Gamma \neq \varnothing $. Therefore $\Gamma $ has a maximal member $%
T$, since $M$ is a Noetherian $R$-module. It is clear that $T$ is not a
classical $2$-absorbing submodule. Therefore, there exists an element $m\in
M\backslash T$ and ideals $I$, $J$, $K$ in $R$ such that $IJKm\subseteq T$
but $IJm\nsubseteq T$, $IKm\nsubseteq T$ and $JKm\nsubseteq T$. The
maximality of $T$ implies that $M/\left( T+IJm\right) $, $M/\left(
T+IKm\right) $ and $M/\left( T+JKm\right)$ have only finitely many minimal
classical 2-absorbing submodules. Suppose $P/T$ be a minimal classical $2$%
-absorbing submodule of $M/T$. So $IJKm\subseteq T\subseteq P$, which
implies that $IJm\subseteq P$ or $IKm\subseteq P$ or $JKm\subseteq P$.
Thus $P/\left( T+IJm\right) $ is a minimal classical 2-absorbing submodule
of $M/\left( T+IJm\right) $ or $P/\left( T+IKm\right) $ is a minimal
classical 2-absorbing submodule of $M/\left( T+IKm\right) $ or $P/\left(
T+JKm\right) $ is a minimal classical 2-absorbing submodule of $M/\left(
T+JKm\right) $. Thus, there are only a finite number of possibilities for
the submodule $P$. This is a contradiction.
\end{proof}

We recall from \cite{B} that if $I$ is a 2-absorbing ideal of a ring $R$, then either $\sqrt{I}=P$ where $P$
is a prime ideal of $R$ or $\sqrt{I}=P_1\cap P_2$ where $P_1,P_2$
are the only distinct minimal prime ideals of $I$.
\begin{corollary}
Let $N$ be a classical 2-absorbing submodule of an $R$-module $M$. Suppose that $m\in M\backslash N$
and $\sqrt{(N:_Rm)}=P$ where $P$ is a prime ideal of $R$ and $(N:_Rm)\neq P$.
Then for each $x\in\sqrt{(N:_Rm)}\backslash(N:_Rm)$, $(N:_Rxm)$ is a prime ideal of $R$ containing 
$P$. Furthermore, either $(N:_Rxm)\subseteq(N:_Rym)$ or $(N:_Rym)\subseteq(N:_Rxm)$
for every $x,y\in\sqrt{(N:_Rm)}\backslash(N:_Rm)$.
\end{corollary}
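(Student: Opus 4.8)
The plan is to reduce everything to the single $2$-absorbing ideal $I:=(N:_Rm)$ of $R$ and to work entirely inside $R$. First I would record the translation $(N:_Rxm)=(I:_Rx)$, valid for every $x\in R$, since $r\in(N:_Rxm)$ means $rx\in(N:_Rm)=I$. Because $m\in M\backslash N$, Theorem \ref{main} (condition (10)) guarantees that $I$ is a $2$-absorbing ideal; by hypothesis $\sqrt I=P$ is prime and $I\neq P$. The crucial extra input is the fact from \cite{B} that a $2$-absorbing ideal whose radical is a prime $P$ satisfies $P^2\subseteq I$; this is the companion of the structure result recalled just before the corollary, and it is the lever that makes the whole argument run.

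For the first assertion, fix $x\in\sqrt{(N:_Rm)}\backslash(N:_Rm)=P\backslash I$, so that $(I:_Rx)$ is proper (as $x\notin I$). Containment of $P$ is immediate: for $p\in P$ we have $px\in P^2\subseteq I$, hence $p\in(I:_Rx)$. For primeness, suppose $ab\in(I:_Rx)$, i.e. $abx\in I$. Since $I$ is $2$-absorbing, $ab\in I$ or $ax\in I$ or $bx\in I$; the last two already give $a\in(I:_Rx)$ or $b\in(I:_Rx)$. In the remaining case $ab\in I\subseteq P$, and primeness of $P$ forces $a\in P$ or $b\in P$; then, since $x\in P$, either $ax\in P^2\subseteq I$ or $bx\in P^2\subseteq I$, so again $a\in(I:_Rx)$ or $b\in(I:_Rx)$. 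Thus $(N:_Rxm)=(I:_Rx)$ is a prime ideal containing $P$.

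For the ``furthermore'' part I would argue by contradiction to prove comparability. Suppose $s\in(I:_Rx)\backslash(I:_Ry)$ and $t\in(I:_Ry)\backslash(I:_Rx)$, which unwinds to $sx\in I$, $sy\notin I$, $ty\in I$, $tx\notin I$. From $sy\notin I$ and $y\in P$ we get $s\notin P$ (otherwise $sy\in P^2\subseteq I$), and symmetrically $t\notin P$; hence $st\notin P$ and in particular $st\notin I$. Now $st(x+y)=t(sx)+s(ty)\in I$, so applying the $2$-absorbing property of $I$ to $s,\,t,\,x+y$ yields $st\in I$ or $s(x+y)\in I$ or $t(x+y)\in I$. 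But $st\notin I$; and since $sx\in I$ while $sy\notin I$ we have $s(x+y)\notin I$; and since $ty\in I$ while $tx\notin I$ we have $t(x+y)\notin I$. This contradicts the $2$-absorbing property, so no such $s,t$ exist and $(N:_Rxm)$, $(N:_Rym)$ are comparable.

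The only genuinely nontrivial ingredient is the inequality $P^2\subseteq I$; once it is in hand, each remaining step is a short verification. I expect the main obstacle to be essentially one of citation and bookkeeping: making sure the recalled structure theorem is invoked in the sharper form that also asserts $P^2\subseteq I$ (not merely the description of $\sqrt I$), since without it both the primeness argument and the ``$s,t\notin P$'' step collapse.
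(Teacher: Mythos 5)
Your proof is correct, and its skeleton---pass to $I=(N:_Rm)$, which is a 2-absorbing ideal of $R$ by Theorem \ref{main} (condition (10)), and use the translation $(N:_Rxm)=(I:_Rx)$---is exactly the reduction the paper makes. The difference lies in what is then cited: the paper's entire proof reads ``By Theorem \ref{main} and \cite[Theorem 2.5]{B}'', i.e.\ it invokes Badawi's Theorem 2.5 wholesale, and that theorem is verbatim the ideal-theoretic statement you are left with after the reduction. You instead import only the containment $P^2\subseteq I$ (which is part of \cite[Theorem 2.4]{B}; as you note, the structure result recalled in the paper just before the corollary states only the description of $\sqrt I$ and omits this containment, but it is proved in \cite{B}) and re-derive the rest yourself: your verification that $(I:_Rx)$ is proper, contains $P$, and is prime (splitting on which of $ab\in I$, $ax\in I$, $bx\in I$ the 2-absorbing property yields, and closing the first case with $P^2\subseteq I$), and your comparability argument via the element $st(x+y)$, are all sound---indeed they essentially reconstruct Badawi's own proof of his Theorem 2.5. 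What your route buys is a self-contained argument whose only external input is $P^2\subseteq I$; and even that could be made internal, since for a 2-absorbing ideal $I$ and $x,y\in\sqrt I$ one has $x^2,y^2\in I$, hence $xy(x+y)=x^2y+xy^2\in I$, and 2-absorption applied to $x$, $y$, $x+y$ forces $xy\in I$ in every case. What the paper's route buys is brevity: the corollary really is just the module-level restatement of \cite[Theorem 2.5]{B}.
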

\begin{proof}
By Theorem \ref{main} and \cite[Theorem 2.5]{B}.
\end{proof}

\begin{corollary}
Let $N$ be a classical 2-absorbing submodule of an $R$-module $M$. Suppose that $m\in M\backslash N$ and $\sqrt{(N:_Rm)}=P_1\cap P_2$
where $P_1$ and $P_2$ are the only nonzero distinct prime ideals of $R$ that are minimal over $(N:_Rm)$.
Then for each $x\in\sqrt{(N:_Rm)}\backslash(N:_Rm)$, $(N:_Rxm)$ is a prime ideal of $R$ containing 
$P_1$ and $P_2$. Furthermore, either $(N:_Rxm)\subseteq(N:_Rym)$ or $(N:_Rym)\subseteq(N:_Rxm)$
for every $x,y\in\sqrt{(N:_Rm)}\backslash(N:_Rm)$.
\end{corollary}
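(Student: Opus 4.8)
The plan is to push everything down to the ground ring and then quote the structure theorem for $2$-absorbing ideals. Put $I=(N:_Rm)$. Since $m\in M\backslash N$, the equivalence (1)$\Leftrightarrow$(10) of Theorem \ref{main} tells us that $I$ is a $2$-absorbing ideal of $R$. The hypothesis $\sqrt{(N:_Rm)}=P_1\cap P_2$ therefore says exactly that $I$ is a $2$-absorbing ideal whose radical is the intersection of its only two minimal primes $P_1$ and $P_2$; this is precisely the second alternative treated in \cite[Theorem 2.5]{B}.

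The bridge between the module assertion and the ideal assertion is the colon identity
$$(N:_Rxm)=(I:_Rx)\qquad\text{for every }x\in R.$$
Indeed, $r\in(N:_Rxm)$ means $(rx)m\in N$, i.e.\ $rx\in(N:_Rm)=I$, i.e.\ $r\in(I:_Rx)$. Since $I=(N:_Rm)$, the set $\sqrt{(N:_Rm)}\backslash(N:_Rm)=\sqrt{I}\backslash I$ over which $x$ ranges is unchanged by this reduction.

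With these observations in hand the corollary follows by direct translation, exactly as in the previous corollary but in the two-minimal-prime branch of the structure theorem. Applying \cite[Theorem 2.5]{B} to the $2$-absorbing ideal $I$ gives, for each $x\in\sqrt{I}\backslash I$, that $(I:_Rx)$ is a prime ideal of $R$ containing both $P_1$ and $P_2$, and, for any $x,y\in\sqrt{I}\backslash I$, that $(I:_Rx)\subseteq(I:_Ry)$ or $(I:_Ry)\subseteq(I:_Rx)$. Rewriting $(I:_Rx)=(N:_Rxm)$ and $(I:_Ry)=(N:_Rym)$ yields the two stated conclusions.

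There is no real obstacle here: once the reduction $I=(N:_Rm)$ and the colon identity are recorded, the whole content is carried by \cite[Theorem 2.5]{B}. The only point needing attention is to confirm that the cited theorem is stated in the two-minimal-prime case with the full conclusions (primeness of $(I:_Rx)$, containment of $P_1$ and $P_2$, and comparability of the colons) rather than merely asserting primeness; I expect it to supply all three, in which case the proof is the same one-line citation as for the preceding corollary.
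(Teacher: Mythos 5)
Your proof is correct and takes essentially the same route as the paper: reduce to the ground ring via Theorem \ref{main} (the equivalence of (1) and (10)) together with the colon identity $(N:_R xm)=\bigl((N:_Rm):_Rx\bigr)$, and then quote Badawi's structure theorem for $2$-absorbing ideals. One citation correction: in \cite{B} the two-minimal-prime case is Theorem 2.6 (which is what the paper cites here), not Theorem 2.5, which treats the prime-radical case of the preceding corollary; Theorem 2.6 does supply all three conclusions you need (primeness of the colon ideal, containment of both $P_1$ and $P_2$, and comparability), so the point you flagged as needing confirmation resolves favorably.
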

\begin{proof}
By Theorem \ref{main} and \cite[Theorem 2.6]{B}.
\end{proof}

An $R$-module $M$ is called a 
\textit{multiplication module} if every submodule $N$ of $M$ has the form $IM
$ for some ideal $I$ of $R$. Let $N$ and $K$ be submodules of a multiplication $R$%
-module $M$ with $N=I_{1}M$ and $K=I_{2}M$ for some ideals $I_{1}$ and $I_{2}
$ of $R$. The product of $N$ and $K$ denoted by $NK$ is defined by $%
NK=I_{1}I_{2}M$. Then by \cite[Theorem 3.4]{Am}, the product of $N$ and $K$
is independent of presentations of $N$ and $K$.

\begin{proposition}
Let $M$ be a multiplication $R$-module and $N$ be a proper submodule of $M$.
The following conditions are equivalent:

\begin{enumerate}
\item $N$ is a classical 2-absorbing submodule of $M$;

\item If $N_1N_2N_3m\subseteq N$ for some submodules $N_1,N_2,N_3$ of $
M$ and $m\in M$, then either $N_1N_2m\subseteq N$ or $N_1N_3m\subseteq N$ or $N_2N_3m\subseteq N$.
\end{enumerate}
\end{proposition}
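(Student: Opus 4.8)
The plan is to reduce everything to Theorem \ref{main}, whose condition (9) is precisely the ideal-theoretic analogue of statement (2). The bridge is the standard dictionary between submodules of a multiplication module and their defining ideals: write $N_1=I_1M$, $N_2=I_2M$ and $N_3=I_3M$, and recall that by \cite[Theorem 3.4]{Am} the products $N_1N_2N_3$, $N_1N_2$, $N_1N_3$, $N_2N_3$ are well defined (independent of presentations) and equal to $I_1I_2I_3M$, $I_1I_2M$, $I_1I_3M$, $I_2I_3M$ respectively. First I would record the key identity $N_1N_2N_3m=I_1I_2I_3m$, and likewise $N_iN_jm=I_iI_jm$: since $Rm$ is itself a submodule it has the form $I_mM$, so $I_1I_2I_3m=I_1I_2I_3(Rm)=I_1I_2I_3I_mM$, which shows that the trailing element $m$ can be absorbed into the submodule product and that both natural readings of the symbol $N_1N_2N_3m$ agree. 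This is the one place where well-definedness of the product must be invoked, and it is the only genuine bookkeeping step in the argument.

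For (1)$\Rightarrow$(2): assume $N$ is classical $2$-absorbing, so by Theorem \ref{main} condition (9) holds. Given submodules with $N_1N_2N_3m\subseteq N$, the identity above rewrites this as $I_1I_2I_3m\subseteq N$; applying condition (9) of Theorem \ref{main} yields $I_1I_2m\subseteq N$ or $I_1I_3m\subseteq N$ or $I_2I_3m\subseteq N$, and translating back gives $N_1N_2m\subseteq N$ or $N_1N_3m\subseteq N$ or $N_2N_3m\subseteq N$, as required.

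For (2)$\Rightarrow$(1): I would specialize the submodules to principal ones. Given $a,b,c\in R$ and $m\in M$ with $abcm\in N$, set $N_1=aM$, $N_2=bM$, $N_3=cM$, that is $I_1=Ra$, $I_2=Rb$, $I_3=Rc$, so that $N_1N_2N_3m=Rabcm\subseteq N$ because $N$ is a submodule containing $abcm$. Hypothesis (2) then forces $Rabm\subseteq N$ or $Racm\subseteq N$ or $Rbcm\subseteq N$, i.e. $abm\in N$ or $acm\in N$ or $bcm\in N$, which is exactly the classical $2$-absorbing condition. The main (and essentially only) obstacle is the translation recorded in the first paragraph; once the identity $N_1N_2N_3m=I_1I_2I_3m$ is in hand, both implications follow immediately from Theorem \ref{main}.
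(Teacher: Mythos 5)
Your proof is correct and takes essentially the same route as the paper's: both directions go through the dictionary $N_i=I_iM$ for a multiplication module and the ideal-theoretic characterization of classical 2-absorbing submodules (condition (9) of Theorem \ref{main}). The only differences are minor: for (2)$\Rightarrow$(1) you verify the definition directly by specializing to the principal choices $N_1=aM$, $N_2=bM$, $N_3=cM$, whereas the paper verifies condition (9) of Theorem \ref{main} with arbitrary ideals $I_1,I_2,I_3$; and your explicit check that $N_1N_2N_3m=I_1I_2I_3m$ is independent of presentations (via $Rm=I_mM$ and \cite[Theorem 3.4]{Am}) is a point the paper leaves implicit.
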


\begin{proof}
(1)$\Rightarrow$(2) Let $N_1N_2N_3m\subseteq N$ for some submodules $N_1,N_2,N_3$
of $M$ and $m\in M$. Since $M$ is multiplication, there are ideals $I_1,I_2,I_3$ of $R$ such that $N_1=I_1M$, $%
N_2=I_2M$ and $N_3=I_3M$. Therefore $I_1I_2I_3m\subseteq N$, and 
so either $I_1I_2m\subseteq N$ or $I_1I_3m\subseteq N$ or $I_2I_3m\subseteq N$.
Hence $N_1N_2m\subseteq N$ or $N_1N_3m\subseteq N$ or $N_2N_3m\subseteq N$.\newline
(2)$\Rightarrow$(1) Suppose that $I_{1}I_{2}I_3m\subseteq N$ for some ideals $%
I_{1},I_{2},I_3$ of $R$ and some $m\in M$. It is sufficient to set $%
N_1:=I_1M$, $N_2:=I_2M$ and $N_3=I_3M$ in part (2).
\end{proof}

In \cite{Q}, Quartararo et al. said that a commutative ring $R$ is a $u$%
-ring provided $R$ has the property that an ideal contained in a finite
union of ideals must be contained in one of those ideals; and a $um$-ring is
a ring $R$ with the property that an $R$-module which is equal to a finite
union of submodules must be equal to one of them. 
They show that every B$\acute{\rm e}$zout ring is a $u$-ring. Moreover, they proved that 
every Pr\"{u}fer domain is a $u$-domain. Also, any ring which contains an infinite field
as a subring is a $u$-ring, \cite[Exercise 3.63]{Sh}.

\begin{theorem}\label{main2}
Let $R$ be a $um$-ring, $M$ be an $R$-module and $N$ be a proper submodule of $M$.
The following conditions are equivalent:
\begin{enumerate}
\item $N$ is classical 2-absorbing;

\item For every $a,b,c\in R$, $(N:_Mabc)=(N:_Mab)$ or $(N:_Mabc)=(N:_Mac)$
or $(N:_Mabc)=(N:_Mbc)$;

\item For every $a,b,c\in R$ and every submodule $L$ of $M$, $abcL\subseteq N$
implies that $abL\subseteq N$ or $acL\subseteq N$ or $bcL\subseteq N$;

\item For every $a,b\in R$ and every submodule $L$ of $M$ with $abL\nsubseteq N$, $(N:_RabL)=(N:_RaL)$
or $(N:_RabL)=(N:_RbL)$;

\item For every $a,b\in R$, every ideal $I$ of $R$ and every submodule $L$ of $M$, $abIL\subseteq N$
implies that $abL\subseteq N$ or $aIL\subseteq N$ or $bIL\subseteq N$;

\item For every $a\in R$, every ideal $I$ of $R$ and every submodule $L$ of $M$ with $aIL\nsubseteq N$, $(N:_RaIL)=(N:_RaL)$
or $(N:_RaIL)=(N:_RIL)$;

\item For every $a\in R$, every ideals $I,~J$ of $R$ and every submodule $L$ of $M$, $aIJL\subseteq N$
implies that $aIL\subseteq N$ or $aJL\subseteq N$ or $IJL\subseteq N$;

\item For every ideals $I,~J$ of $R$ and every submodule $L$ of $M$ with $IJL\nsubseteq N$, $(N:_RIJL)=(N:_RIL)$
or $(N:_RIJL)=(N:_RJL)$;

\item For every ideals $I,J,K$ of $R$ and every submodule $L$ of $M$, $IJKL\subseteq N$
implies that $IJL\subseteq N$ or $IKL\subseteq N$ or $JKL\subseteq N$;

\item For every submodule $L$ of $M$ not contained in $N$, $(N:_RL)$ is a 2-absorbing ideal of $R$. 
\end{enumerate}
\end{theorem}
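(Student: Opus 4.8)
The plan is to run the same cycle of implications as in Theorem~\ref{main}, but with each element $m$ replaced by a submodule $L$; the one genuinely new ingredient is the $um$-ring hypothesis, which is precisely what turns a ``union'' characterization into an ``equals one of them'' characterization.

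I would begin with (1)$\Rightarrow$(2). Condition (1) is the hypothesis of Theorem~\ref{main} (which requires nothing of $R$), so that theorem gives $(N:_Mabc)=(N:_Mab)\cup(N:_Mac)\cup(N:_Mbc)$ for all $a,b,c\in R$. Each of the three pieces is a submodule contained in $(N:_Mabc)$, and their union is all of $(N:_Mabc)$; since $R$ is a $um$-ring, a module equal to a finite union of submodules equals one of them, so $(N:_Mabc)$ coincides with one of the three. That is (2). This is the only step that uses the hypothesis on $R$, and it is the crux of the theorem.

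The remaining forward implications propagate exactly as in Theorem~\ref{main}. The step (2)$\Rightarrow$(3) is immediate, since $abcL\subseteq N$ means $L\subseteq(N:_Mabc)$ and (2) forces $L$ into one of the three colon submodules. The ``colon'' statements (4), (6), (8) each follow from the preceding ``product'' statement by the two-move argument of Theorem~\ref{main}: one first shows the large colon ideal is the \emph{union} of the two small colon ideals (using the product statement and the standing assumption that the triple product is not contained in $N$), and then applies the elementary fact that an ideal which is a union of two ideals equals one of them---this move needs \emph{no} $um$-hypothesis. The ``product'' statements (5), (7), (9) follow from the preceding ``colon'' statement by reading the extra ideal as lying inside the relevant colon ideal and splitting into two cases. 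I would write out one representative of each type, say (3)$\Rightarrow$(4) and (4)$\Rightarrow$(5), and note that the rest are identical in form.

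To close the cycle I would prove (9)$\Rightarrow$(1): given $abcm\in N$, put $I=Ra$, $J=Rb$, $K=Rc$, $L=Rm$; the point is that $IJKL=abc\,Rm=R(abcm)\subseteq N$ because $N$ is a submodule, so (9) gives $abRm\subseteq N$, $acRm\subseteq N$, or $bcRm\subseteq N$, and evaluating at $1$ recovers the definition of classical $2$-absorbing. Finally (9)$\Leftrightarrow$(10): for a submodule $L\nsubseteq N$ the ideal $(N:_RL)$ is proper, and $abc\in(N:_RL)$ translates (again via $R(abcL)\subseteq N$) into $IJKL\subseteq N$ with principal $I,J,K$, so (9) makes $(N:_RL)$ a $2$-absorbing ideal; conversely $IJKL\subseteq N$ with $L\nsubseteq N$ gives $IJK\subseteq(N:_RL)$, whence Badawi's ideal-theoretic characterization of $2$-absorbing ideals recalled in the Introduction yields $IJ$, $IK$, or $JK$ inside $(N:_RL)$, which is the conclusion of (9). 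The main thing to keep straight throughout is the separation between the ``union collapses to one piece'' step (which invokes $um$) and the ``union of two ideals'' step (which is elementary); only the former uses the hypothesis on $R$.
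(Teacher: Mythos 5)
Your proposal is correct and takes essentially the same route as the paper: the paper's entire proof is the remark ``Similar to the proof of Theorem \ref{main},'' and your write-up is a faithful elaboration of exactly that strategy, running the same cycle (1)$\Rightarrow$(2)$\Rightarrow\cdots\Rightarrow$(9)$\Rightarrow$(1) with (9)$\Leftrightarrow$(10) handled via Badawi's ideal-theoretic characterization. You also correctly isolate the one genuinely new ingredient---the $um$-hypothesis collapsing the three-fold union of submodules in (1)$\Rightarrow$(2)---while observing that the two-ideal union steps in (4), (6), (8) remain elementary, which is precisely the point the paper leaves implicit.
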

\begin{proof}
Similar to the proof of Theorem \ref{main}.
\end{proof}

\begin{proposition}
Let $R$ be a $um$-ring and $N$ be a proper submodule of an $R$-module $M$. 
Then $N$ is a classical 2-absorbing submodule of $M$ if and only if $N$ is a 4-absorbing submodule of $M$
and $(N:_RM)$ is a 2-absorbing ideal of $R$.
\end{proposition}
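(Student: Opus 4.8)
The plan is to prove the two implications separately, leaning on the characterizations already obtained and on the $um$-hypothesis. For the forward direction, suppose $N$ is classical $2$-absorbing. To see that $(N:_RM)$ is $2$-absorbing, take $abc\in(N:_RM)$, so that $abcm\in N$ for every $m\in M$; classical $2$-absorbing then gives $M=(N:_Mab)\cup(N:_Mac)\cup(N:_Mbc)$, and since $R$ is a $um$-ring the module $M$ must equal one of these three submodules, i.e. $ab\in(N:_RM)$ or $ac\in(N:_RM)$ or $bc\in(N:_RM)$. This is exactly where the $um$-assumption is used. To see that $N$ is $4$-absorbing, I would invoke condition (10) of Theorem \ref{main}: for $m\in M\setminus N$ the ideal $(N:_Rm)$ is $2$-absorbing, hence $3$-absorbing (if $xyzw\in I$ with $I$ $2$-absorbing, apply it to $x\cdot y\cdot(zw)$; in each resulting case a product of three of $x,y,z,w$ lies in $I$). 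Now given $a_1a_2a_3a_4m\in N$, the case $m\in N$ is trivial, and otherwise $a_1a_2a_3a_4\in(N:_Rm)$ forces a product of three of the $a_i$ into $(N:_Rm)$, i.e. that product times $m$ lies in $N$.

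For the reverse direction, assume $N$ is $4$-absorbing and $(N:_RM)$ is $2$-absorbing, and let $abcm\in N$. The clean mechanism mirrors Proposition \ref{abs-class}(2): split on whether $abc\in(N:_RM)$. If $abc\in(N:_RM)$, then since $(N:_RM)$ is $2$-absorbing one of $ab,ac,bc$ lies in $(N:_RM)$, and the corresponding one of $abm,acm,bcm$ lies in $N$, as desired. It therefore remains to treat the case $abc\notin(N:_RM)$, in which I want to conclude that a product of two of $a,b,c$ with $m$ lies in $N$. But this last statement is precisely the assertion that $N$ is a \emph{$3$-absorbing} submodule, so the whole reverse direction reduces to deducing the three-factor absorbing property from the stated four-factor hypothesis.

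The step I expect to be the main obstacle is exactly this reconciliation between the four-factor hypothesis and the three-factor nature of the classical $2$-absorbing condition. A naive attempt to apply $4$-absorbing to $abcm\in N$ by adjoining a fourth factor $d$ fails: one has $abcdm\in N$ automatically, but then the branch ``omit $d$'' of the $4$-absorbing conclusion already returns $abcm\in N$, so the conclusion is satisfied trivially and yields no new relation; the same collapse occurs for any four-tuple containing $a,b,c$ as a sub-multiset. The route I would pursue instead is to fix $m\in M\setminus N$ and study $(N:_Rm)$: the hypotheses readily give that $(N:_Rm)$ is $3$-absorbing and contains the $2$-absorbing ideal $(N:_RM)$, and by condition (10) of Theorem \ref{main} it suffices to upgrade $(N:_Rm)$ to a genuine $2$-absorbing ideal. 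Lowering the absorbing degree of $(N:_Rm)$ from $3$ to $2$, using only the containment of the $2$-absorbing ideal $(N:_RM)$ together with the $um$-property of $R$, is the crux on which the argument turns.
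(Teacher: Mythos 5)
Your forward direction is sound and, modulo detail, is the paper's own: the paper disposes of it by calling ``classical $2$-absorbing $\Rightarrow$ $4$-absorbing'' trivial and citing Theorem \ref{main2} for $(N:_RM)$, while you supply the actual $um$-argument and the passage through condition (10) of Theorem \ref{main}; both are correct. The problem is the reverse direction, which you never complete: you reduce it to upgrading the four-factor hypothesis to a three-factor conclusion and stop at what you call the crux. As a proof, then, the proposal is incomplete. But your diagnosis of the obstacle is exactly right, and it is fatal to the paper's own argument. The paper's proof of the ``if'' direction reads: from $a_1a_2a_3m\in N$ with $a_1a_2m\notin N$, $a_1a_3m\notin N$, $a_2a_3m\notin N$, ``then $a_1a_2a_3\in(N:_RM)$.'' The only way to extract that from $4$-absorbingness is to apply it to $a_1\cdot a_2\cdot a_3\cdot 1\cdot m\in N$, and, as you observe, one admissible conclusion of that application is that the product of the three factors $a_1,a_2,a_3$ with $m$ lies in $N$, which is the hypothesis itself. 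The $4$-absorbing condition is therefore satisfied vacuously and yields no information; the paper's deduction is invalid, and it is precisely the ``collapse'' you described.

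Moreover, the gap cannot be repaired, because the reverse implication is false. Let $R=\mathbb{Q}[t]$, which is a $um$-ring (every $R$-module is a $\mathbb{Q}$-vector space, and a vector space over an infinite field is never a finite union of proper subspaces), let $M=\mathbb{Q}[t]/(t^3)\oplus\mathbb{Q}[t]$, and let $N=0$. Then $(N:_RM)=(0)$ is prime, hence $2$-absorbing. Also $N$ is $4$-absorbing: suppose $a_1a_2a_3a_4(x,q)=0$; if some $a_i=0$ then $a_1a_2a_3a_4=0\in(N:_RM)$, and otherwise $q=0$ and $t^3$ divides $a_1a_2a_3a_4y$ for a lift $y$ of $x$; discarding a factor $a_i$ of least $t$-adic order, the remaining three factors still have the property that $t^3$ divides their product times $y$ (if the least order is $0$ the total order is unchanged, and if it is at least $1$ the remaining three already contribute order at least $3$), so the product of those three $a_i$'s annihilates $(x,q)$. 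Yet $N$ is not classical $2$-absorbing, since $t\cdot t\cdot t\cdot(\overline{1},0)=0$ while $t^2(\overline{1},0)=(\overline{t^2},0)\neq 0$. So the proposition as stated is wrong. It becomes correct, with both your skeleton and the paper's intended argument going through verbatim, if ``$4$-absorbing'' is replaced by ``$3$-absorbing'': then $abcm\in N$ with none of $abm,acm,bcm$ in $N$ genuinely forces $abc\in(N:_RM)$, and $2$-absorbingness of $(N:_RM)$ returns one of $ab,ac,bc$ to $(N:_RM)$, a contradiction; that is exactly your proposed split on whether $abc\in(N:_RM)$.
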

\begin{proof}
It is trivial that if $N$ is classical 2-absorbing, then it is 4-absorbing. 
Also, Theorem \ref{main2} implies that $(N:_RM)$ is a 2-absorbing ideal of $R$. Now, assume that
$N$ is a 4-absorbing submodule of $M$ and $(N:_RM)$ is a 2-absorbing ideal of $R$. 
Let $a_1a_2a_3m\in N$ for some $a_1,a_2,a_3\in R$ and $m\in M$ such that neither $a_1a_2m\in N$ nor $a_1a_3m\in N$
nor $a_2a_3m\in N$. Then $a_1a_2a_3\in(N:_RM)$ and so either $a_1a_2\in(N:_RM)$ or $a_1a_3\in(N:_RM)$
or $a_2a_3\in(N:_RM)$.This contradiction shows that $N$ is classical 2-absorbing.
\end{proof}

\begin{proposition}
Let $M$ be an $R$-module and $N$ be a classical 2-absorbing submodule of $M$.
The following conditions hold:
\begin{enumerate}
\item For every $a,b,c\in R$ and $m\in M$, $(N:_Rabcm)=(N:_Rabm)\cup(N:_Racm)\cup(N:_Rbcm)$;

\item If $R$ is a $u$-ring, then for every $a,b,c\in R$ and $m\in M$,
$(N:_Rabcm)=(N:_Rabm)$ or $(N:_Rabcm)=(N:_Racm)$ or $(N:_Rabcm)=(N:_Rbcm)$.
\end{enumerate}
\end{proposition}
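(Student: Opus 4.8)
The plan is to establish part (1) by a direct double-inclusion argument, and then to deduce part (2) from part (1) using only the defining property of a $u$-ring; no structural machinery beyond the classical $2$-absorbing hypothesis is needed. Throughout, recall that for $x\in M$ the set $(N:_Rx)=\{r\in R\mid rx\in N\}$ is an ideal of $R$, a fact that follows at once from $N$ being a submodule.

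For the equality in part (1), the inclusion $\supseteq$ is immediate: if $r\in(N:_Rabm)$ then $rabm\in N$, so $rabcm=c(rabm)\in N$ because $N$ is a submodule, giving $r\in(N:_Rabcm)$; the same reasoning handles $(N:_Racm)$ and $(N:_Rbcm)$. For $\subseteq$, I would take $r\in(N:_Rabcm)$, so $rabcm\in N$, and the key step is to regroup the four scalars as a product of three ring elements acting on $m$, namely $(ra)bcm\in N$. Applying the hypothesis that $N$ is classical $2$-absorbing to this yields $(ra)bm\in N$, $(ra)cm\in N$, or $bcm\in N$; the first two say $r\in(N:_Rabm)$ or $r\in(N:_Racm)$, while the last, $bcm\in N$, is upgraded to $rbcm\in N$ (again since $N$ is a submodule), giving $r\in(N:_Rbcm)$. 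This proves the reverse inclusion and hence the asserted equality of the ideal $(N:_Rabcm)$ with the union of the three ideals.

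For part (2), I would observe that all four sets appearing in part (1) are ideals of $R$, and that part (1) exhibits $(N:_Rabcm)$ as an ideal contained in the finite union $(N:_Rabm)\cup(N:_Racm)\cup(N:_Rbcm)$. The $u$-ring hypothesis then forces $(N:_Rabcm)$ to be contained in one of the three ideals; combining this containment with the reverse containment already recorded in the $\supseteq$ part of (1) upgrades it to the corresponding equality. I do not anticipate a genuine obstacle here: the only point demanding care is the regrouping $rabcm=(ra)bcm$ in part (1), which is precisely what lets the three-factor classical $2$-absorbing property apply to a product of four scalars, together with the small observation that the conclusion $bcm\in N$ must be promoted to $rbcm\in N$ before it can be read as membership in $(N:_Rbcm)$.
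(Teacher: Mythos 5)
Your proof is correct and takes essentially the same approach as the paper: both regroup the four-factor product $rabcm\in N$ so that the three-scalar definition of classical $2$-absorbing applies, and both obtain part (2) from part (1) via the defining property of a $u$-ring. The only difference is cosmetic: the paper absorbs $r$ into the module element, writing $abc(rm)\in N$, so each of the three outcomes reads off directly as $r\in(N:_Rabm)$, $r\in(N:_Racm)$ or $r\in(N:_Rbcm)$, whereas your grouping $(ra)bcm\in N$ makes the third case asymmetric and requires the extra (easy) promotion of $bcm\in N$ to $rbcm\in N$.
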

\begin{proof}
(1) Let $a,b,c\in R$ and $m\in M$. Suppose that $r\in(N:_Rabcm)$. Then $abc(rm)\in N$. So,
either $ab(rm)\in N$ or $ac(rm)\in N$ or $bc(rm)\in N$. Therefore, either
$r\in(N:_Rabm)$ or $r\in(N:_Racm)$ or $r\in(N:_Rbcm)$. Consequently
$(N:_Rabcm)=(N:_Rabm)\cup(N:_Racm)\cup(N:_Rbcm)$.

(2) Use part (1).
\end{proof}

\begin{proposition}\label{multiplication}
Let $R$ be a $um$-ring, $M$ be a multiplication $R$-module and $N$ be a proper submodule of $M$.
The following conditions are equivalent:

\begin{enumerate}
\item $N$ is a classical 2-absorbing submodule of $M$;

\item If $N_1N_2N_3N_4\subseteq N$ for some submodules $N_1,N_2,N_3,N_4$ of $
M$, then either $N_1N_2N_4\subseteq N$ or $N_1N_3N_4\subseteq N$ or $N_2N_3N_4\subseteq N$;

\item If $N_1N_2N_3\subseteq N$ for some submodules $N_1,N_2,N_3$ of $
M$, then either $N_1N_2\subseteq N$ or $N_1N_3\subseteq N$ or $N_2N_3\subseteq N$;

\item $N$ is a 2-absorbing submodule of $M$;

\item $(N:_RM)$ is a 2-absorbing ideal of $R$.
\end{enumerate}
\end{proposition}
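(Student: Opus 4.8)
The engine of the proof is the identity $N=IM$ with $I:=(N:_RM)$, valid for every submodule of a multiplication module, together with the fact that for submodules $N_i=I_iM$ the product is $N_1\cdots N_k=I_1\cdots I_kM$ independently of the chosen presentations. The plan is to run the cycle $(1)\Rightarrow(2)\Rightarrow(3)\Rightarrow(4)\Rightarrow(1)$ and then to splice $(5)$ into it by proving $(1)\Rightarrow(5)$ and $(5)\Rightarrow(3)$.

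For $(1)\Rightarrow(2)$ I would invoke the equivalence of $(1)$ and item $(9)$ in Theorem \ref{main2} (available since $R$ is a $um$-ring): writing $N_i=I_iM$, the hypothesis $N_1N_2N_3N_4\subseteq N$ reads $I_1I_2I_3\,N_4\subseteq N$, so Theorem \ref{main2}(9) applied with the ideals $I_1,I_2,I_3$ and the submodule $L=N_4$ yields one of $I_1I_2N_4,I_1I_3N_4,I_2I_3N_4\subseteq N$, which are exactly $N_1N_2N_4,N_1N_3N_4,N_2N_3N_4$. Specializing $N_4=M$ collapses a length-four product to a length-three one and gives $(2)\Rightarrow(3)$ at once.

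The one step requiring a genuine trick is $(3)\Rightarrow(4)$. Given $abm\in N$, I would test condition $(3)$ on the three submodules $aM$, $bM$, and the cyclic submodule $Rm$; since $Rm=(Rm:_RM)M$, their product is $ab\,Rm=R(abm)\subseteq N$. The three possible conclusions of $(3)$ read $abM\subseteq N$ (i.e.\ $ab\in(N:_RM)$), $a\,Rm\subseteq N$ (whence $am\in N$), and $b\,Rm\subseteq N$ (whence $bm\in N$), which is precisely the defining disjunction of a $2$-absorbing submodule. Then $(4)\Rightarrow(1)$ is immediate from Proposition \ref{abs-class}(1). Finally, $(1)\Rightarrow(5)$ is Theorem \ref{main2}(10) evaluated at $L=M$ (legitimate because $N$ is proper, so $M\nsubseteq N$), and for $(5)\Rightarrow(3)$ I would note that $N_1N_2N_3=I_1I_2I_3M\subseteq N$ forces $I_1I_2I_3\subseteq(N_1N_2N_3:_RM)\subseteq(N:_RM)=I$, so the $2$-absorbing ideal $I$ swallows one of $I_1I_2,I_1I_3,I_2I_3$, and multiplying back by $M$ returns the desired conclusion.

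The only real obstacle to watch is the failure of cancellation for ideals acting on $M$: from $I_1M\subseteq I_2M$ one cannot infer $I_1\subseteq I_2$. This is why the passages between the module-product conditions $(2),(3),(4)$ and the ideal condition $(5)$ must be routed through the colon operator $(-:_RM)$ and the identity $N=(N:_RM)M$ rather than by cancelling $M$; every other implication is a routine substitution once the products $I_1\cdots I_kM$ are recognized.
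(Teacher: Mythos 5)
Your proposal is correct and follows essentially the same route as the paper: the cycle $(1)\Rightarrow(2)\Rightarrow(3)\Rightarrow(4)\Rightarrow(1)$ is run with the same ingredients (presentations $N_i=I_iM$, Theorem~\ref{main2}, and Proposition~\ref{abs-class}(1)), and the treatment of $(5)$ rests on the same colon-ideal manipulation together with Badawi's strongly $2$-absorbing property. The only difference is how $(5)$ is spliced in: the paper proves $(4)\Rightarrow(5)$ by citing \cite[Theorem 2.3]{PB} and $(5)\Rightarrow(4)$ directly, whereas you obtain $(1)\Rightarrow(5)$ from Theorem~\ref{main2}(10) with $L=M$ and prove $(5)\Rightarrow(3)$ directly --- a marginally more self-contained variant, not a different method.
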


\begin{proof}
(1)$\Rightarrow$(2) Let $N_1N_2N_3N_4\subseteq N$ for some submodules $N_1,N_2,N_3,N_4$
of $M$. Since $M$ is multiplication, there are ideals $I_1,I_2,I_3$ of $R$ such that $N_1=I_1M$, $%
N_2=I_2M$ and $N_3=I_3M$. Therefore $I_1I_2I_3N_4\subseteq N$, and 
so $I_1I_2N_4\subseteq N$ or $I_1I_3N_4\subseteq N$ or $I_2I_3N_4\subseteq N$.
Thus by Theorem \ref{main2}, either $N_1N_2N_4\subseteq N$ or $N_1N_3N_4\subseteq N$ or $N_2N_3N_4\subseteq N$.\newline
(2)$\Rightarrow$(3) Is easy.\newline
(3)$\Rightarrow$(4) Suppose that $I_{1}I_{2}K\subseteq N$ for some ideals $%
I_{1},~I_{2}$ of $R$ and some submodule $K$ of $M$. It is sufficient to set $%
N_1:=I_1M$, $N_2:=I_2M$ and $N_3=K$ in part (3).\newline
(4)$\Rightarrow$(1) By part (1) of Proposition \ref{abs-class}.\newline
(4)$\Rightarrow$(5) By \cite[Theorem 2.3]{PB}. \newline
(5)$\Rightarrow$(4) Let $I_1I_2K\subseteq N$ for some ideals $I_1,~I_2$ of $R$ and some
submodule $K$ of $M$. Since $M$ is multiplication, then there is an ideal $%
I_3$ of $R$ such that $K=I_3M$. Hence $I_1I_2I_3\subseteq (N:_{R}M)$ which
implies that either $I_1I_2\subseteq (N:_{R}M)$ or $I_1I_3\subseteq {%
(N:_{R}M)}$ or $I_2I_3\subseteq {(N:_{R}M)}$.
If $I_1I_2\subseteq (N:_{R}M),$ then we are done. So, suppose
that $I_1I_3\subseteq{(N:_{R}M)}$. Thus $I_1I_3M=I_1K\subseteq N$. Similary if $I_2I_3\subseteq {%
(N:_{R}M)}$, then we have $I_2K\subseteq N$.
\end{proof}

\begin{definition} 
Let $R$ be a um-ring, $M$ be an $R$-module and $S$ be a subset of $M\backslash
\left\{ 0\right\} $. If for all ideals $I$, $J$, $Q$ of $R$ and all
submodules $K$, $L$ of $M$, $\left( K+IJL\right) \cap S\neq \emptyset $ and $%
\left( K+IQL\right) \cap S\neq \emptyset $ and $\left( K+JQL\right) \cap
S\neq \emptyset $ implies $\left( K+IJQL\right) \cap S\neq \emptyset $, then the subset $%
S$ is called {\it classical 2-absorbing m-closed}.
\end{definition}

\begin{proposition}
Let $R$ be a um-ring, $M$ be $R$-module and $N$ a submodule of $M$. Then $N$
is a classical 2-absorbing submodule if and only if $M\backslash N$ is a
classical 2-absorbing m-closed.
\end{proposition}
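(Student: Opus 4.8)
The plan is to show that, once its set-theoretic membership clauses are rewritten as submodule containments, the defining property of ``classical 2-absorbing m-closed'' is exactly the contrapositive of condition (9) of Theorem \ref{main2}. The statement then reduces to the already-established equivalence (1)$\Leftrightarrow$(9) there, and the proof becomes essentially a translation exercise.

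First I would record two elementary reformulations. Since $N$ is a submodule we have $0\in N$, so $S:=M\setminus N\subseteq M\setminus\{0\}$ and the definition applies. For any submodule $A$ of $M$, $A\cap S\neq\emptyset$ holds if and only if $A\nsubseteq N$; applying this to the submodules $K+IJL$, $K+IQL$, $K+JQL$ and $K+IJQL$ turns every intersection clause in the definition of m-closedness into a non-containment statement. Second, for submodules $K,B$ one has $K+B\subseteq N$ if and only if $K\subseteq N$ and $B\subseteq N$. Taking contrapositives, the assertion ``$S$ is classical 2-absorbing m-closed'' becomes: for all ideals $I,J,Q$ and submodules $K,L$, if $K+IJQL\subseteq N$ then $K+IJL\subseteq N$ or $K+IQL\subseteq N$ or $K+JQL\subseteq N$.

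Next I would eliminate the auxiliary submodule $K$, which is where the two directions split. For ``$N$ classical 2-absorbing $\Rightarrow$ $S$ m-closed'', I assume condition (9) and suppose $K+IJQL\subseteq N$; then $K\subseteq N$ and $IJQL\subseteq N$, so (9) yields one of $IJL,IQL,JQL\subseteq N$, and adjoining $K\subseteq N$ gives the matching $K+(\cdot)\subseteq N$. Conversely, given that $S$ is m-closed, to verify (9) I specialize to $K=0$: if $IJQL\subseteq N$ but none of $IJL,IQL,JQL$ lies in $N$, then $(IJL)\cap S$, $(IQL)\cap S$ and $(JQL)\cap S$ are all nonempty while $(IJQL)\cap S=\emptyset$, contradicting m-closedness. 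Hence (9) holds, and by Theorem \ref{main2}, $N$ is classical 2-absorbing.

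The argument is mostly bookkeeping; the one point that requires care is properness. ``Classical 2-absorbing'' presupposes $N\neq M$, which corresponds exactly to $S=M\setminus N\neq\emptyset$, so I would note at the outset that both sides are understood for nonempty $S$ (equivalently, proper $N$); with that convention the two reformulations above make the equivalence immediate, and in the reverse direction nonemptiness of $S$ is precisely what guarantees $N$ is proper so that Theorem \ref{main2} applies. The only substantive hypothesis beyond these translations is that $R$ is a $um$-ring, which is what licenses the invocation of Theorem \ref{main2}.
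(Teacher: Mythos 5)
Your proof is correct and follows essentially the same route as the paper's: both directions reduce to condition (9) of Theorem \ref{main2}, with the forward direction splitting $K+IJQL\subseteq N$ into $K\subseteq N$ and $IJQL\subseteq N$ and recombining, and the converse specializing to $K=(0)$. Your explicit attention to the properness of $N$ (equivalently, nonemptiness of $S$) is a small point the paper glosses over, but otherwise the two arguments coincide.
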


\begin{proof}
Suppose that $N$ is a classical 2-absorbing submodule of $M$ and $I$, $J$, $Q
$ are ideals of $R$ and $K$, $L$ are submodules of $M$ such that $\left(
K+IJL\right) \cap S\neq \emptyset $ and $\left( K+IQL\right) \cap S\neq
\emptyset $ and $\left( K+JQL\right) \cap S\neq \emptyset $ where $%
S=M\backslash N$. Assume that $\left( K+IJQL\right) \cap S=\emptyset $. Then
$K+IJQL\subseteq N$ and so $K\subseteq N$ and $IJQL\subseteq N$. Since $N$
is a classical 2-absorbing submodule, we get $IJL\subseteq N$ or $IQL\subseteq N
$ or $JQL\subseteq N$. If $IJL\subseteq N$, then we get $\left( K+IJL\right)
\cap S=\emptyset $, since $K\subseteq N$. This is a contradiction. By the
other cases we get similar contradictions. Now for the converse suppose that $%
S=M\backslash N$ is a classical 2-absorbing m-closed and assume that $%
IJQL\subseteq N$ for some ideals $I$, $J$, $Q$ of $R$ and submodule $L$ of $%
M$. Then we get for submodule $K=(0)$, $ K+IJQL \subseteq N$.
Thus $\left( K+IJQL\right) \cap S=\emptyset $. Since $S$ is a classical
2-absorbing m-closed, $\left( K+IJL\right) \cap S=\emptyset $ or $\left(
K+IQL\right) \cap S=\emptyset $ or $\left( K+JQL\right) \cap S=\emptyset $.
Hence $IJL\subseteq N$ or $IQL\subseteq N$ or $JQL\subseteq N$. So $N$ is a
classical 2-absorbing submodule.
\end{proof}

\begin{proposition}
Let $R$ be a um-ring, $M$ be an $R$-module, $N$ a submodule of $M$ and $%
S=M\backslash N$. The following conditions are equivalent:
\begin{enumerate}
\item $N$ is a classical 2-absorbing submodule of $M$;
\item $S$ is a classical 2-absorbing m-closed;
\item For every ideals $I$, $J$, $Q$ of $R$ and every submodule $L$ of $M$,
if $IJL\cap S\neq \emptyset $ and $IQL\cap S\neq \emptyset $ and $JQL\cap
S\neq \emptyset $, then $IJQL\cap S\neq \emptyset $;
\item For every ideals $I$, $J$, $Q$ of $R$ and every $m\in M$, if $IJm\cap
S\neq \emptyset $ and $IQm\cap S\neq \emptyset $ and $JQm\cap S\neq
\emptyset $, then $IJQm\cap S\neq \emptyset $.
\end{enumerate}
\end{proposition}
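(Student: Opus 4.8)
The plan is to reduce all four conditions to characterizations already in hand, using the single elementary observation that, because $S=M\backslash N$, a subset $X\subseteq M$ satisfies $X\cap S\neq\emptyset$ if and only if $X\nsubseteq N$ (equivalently $X\cap S=\emptyset$ if and only if $X\subseteq N$). With this dictionary each of (2), (3), (4) becomes a verbatim restatement, or a contrapositive, of a statement already shown equivalent to (1). First I would dispose of (1)$\Leftrightarrow$(2): this is exactly the content of the preceding proposition, so nothing new is required.

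Next, for (1)$\Leftrightarrow$(3), I would apply the dictionary to rewrite (3) as: whenever $IJL\nsubseteq N$, $IQL\nsubseteq N$ and $JQL\nsubseteq N$, then $IJQL\nsubseteq N$. Taking the contrapositive converts this into the assertion that $IJQL\subseteq N$ implies $IJL\subseteq N$ or $IQL\subseteq N$ or $JQL\subseteq N$, which, after renaming $Q$ as $K$, is precisely condition (9) of Theorem \ref{main2}. Since $R$ is a $um$-ring, Theorem \ref{main2} supplies (1)$\Leftrightarrow$(9), and hence (1)$\Leftrightarrow$(3). Completely analogously, for (1)$\Leftrightarrow$(4) the same translation rewrites (4) as: $IJQm\subseteq N$ implies $IJm\subseteq N$ or $IQm\subseteq N$ or $JQm\subseteq N$, where $IJQm=(IJQ)m$, etc. This is condition (9) of Theorem \ref{main} (for which the $um$-hypothesis is not even needed), so (1)$\Leftrightarrow$(4) follows from Theorem \ref{main}. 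One may also note the direct link $(3)\Rightarrow(4)$ obtained by specializing $L=Rm$, since $IJRm=IJm$.

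The proof presents essentially no obstacle; the only point demanding genuine care is the set-theoretic translation $X\cap S\neq\emptyset\Leftrightarrow X\nsubseteq N$ and the correct matching of the product notations $IJQL$ and $IJQm=(IJQ)m$ with the ideal-product conditions of Theorems \ref{main} and \ref{main2}. All the substantive module-theoretic work has already been absorbed into those two theorems and into the preceding proposition, so here it suffices to record these reformulations.
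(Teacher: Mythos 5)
Your proof is correct and takes essentially the same route as the paper: the paper's own proof is just the one-line citation of the preceding proposition (for (1)$\Leftrightarrow$(2)), Theorem \ref{main2} (for (1)$\Leftrightarrow$(3)) and Theorem \ref{main} (for (1)$\Leftrightarrow$(4)), exactly the reductions you carry out. You have merely made explicit the dictionary $X\cap S\neq\emptyset\Leftrightarrow X\nsubseteq N$ and the contrapositive bookkeeping that the paper leaves to the reader.
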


\begin{proof}
It follows from the previous Proposition, Theorem \ref{main} and Theorem \ref{main2}.
\end{proof}

\begin{theorem}
Let $R$ be a um-ring, $M$ be an $R$-module and $S$ be a classical 2-absorbing
m-closed. Then the set of all submodules of $M$ which are disjoint from $S$
has at least one maximal element. Any such maximal element is a classical
2-absorbing submodule.
\end{theorem}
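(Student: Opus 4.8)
The plan is to treat the two assertions in turn: first produce a maximal element by a routine Zorn's Lemma argument, and then show that any such maximal element is forced to be classical 2-absorbing by feeding principal submodules into the m-closed hypothesis on $S$. I would let $\Sigma$ denote the collection of all submodules of $M$ disjoint from $S$, ordered by inclusion. Since $S \subseteq M \setminus \{0\}$, the zero submodule meets $S$ emptily, so $(0) \in \Sigma$ and $\Sigma \neq \emptyset$. Given a chain $\{N_i\}_{i \in \Lambda}$ in $\Sigma$, its union $\bigcup_{i} N_i$ is again a submodule (a directed union of submodules), and $\left(\bigcup_{i} N_i\right) \cap S = \bigcup_{i}\left(N_i \cap S\right) = \emptyset$; hence the union lies in $\Sigma$ and bounds the chain. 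Zorn's Lemma then yields a maximal element $N$, and since $S \neq \emptyset$ while $N \cap S = \emptyset$, we get $N \neq M$, so $N$ is proper.

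For the second assertion I would argue by contradiction. Suppose $N$ is a maximal element of $\Sigma$ but is not classical 2-absorbing, so there are $a,b,c \in R$ and $m \in M$ with $abcm \in N$ yet $abm \notin N$, $acm \notin N$, and $bcm \notin N$. Then each of $N + Rabm$, $N + Racm$, $N + Rbcm$ strictly contains $N$, so by maximality none of them belongs to $\Sigma$; that is, each meets $S$. I would then invoke the classical 2-absorbing m-closedness of $S$ with the specific choices $I = Ra$, $J = Rb$, $Q = Rc$, $K = N$, and $L = Rm$. The point of this choice is that the relevant products collapse to principal submodules: $K + IJL = N + Rabm$, $K + IQL = N + Racm$, and $K + JQL = N + Rbcm$, all three of which meet $S$ by the previous sentence. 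The m-closed hypothesis then forces $(K + IJQL) \cap S = (N + Rabcm) \cap S \neq \emptyset$. But $abcm \in N$ gives $Rabcm \subseteq N$, so $N + Rabcm = N$, whence $N \cap S \neq \emptyset$, contradicting $N \in \Sigma$. Therefore $N$ is classical 2-absorbing.

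The Zorn's Lemma half is entirely routine. The step I expect to demand the most care is the bookkeeping that makes the m-closed property applicable: verifying that $(Ra)(Rb)(Rm) = Rabm$ and, more importantly, that $(Ra)(Rb)(Rc)(Rm) = Rabcm$, so that the three hypotheses of the definition match the three strict extensions of $N$ exactly, and the conclusion lands precisely on $N + Rabcm$. Once that identification is in place, the observation $abcm \in N \Rightarrow N + Rabcm = N$ is what converts the m-closed conclusion into the desired contradiction with $N \cap S = \emptyset$.
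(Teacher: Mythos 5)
Your proof is correct and follows essentially the same route as the paper's: Zorn's Lemma on the poset of submodules disjoint from $S$, then a contradiction in which maximality forces the three strict extensions $N+Rabm$, $N+Racm$, $N+Rbcm$ to meet $S$, so that m-closedness (applied with $K=N$, $I=Ra$, $J=Rb$, $Q=Rc$, $L=Rm$) forces $N$ itself to meet $S$. The only cosmetic difference is that the paper runs the argument with arbitrary ideals $I,J,Q$ and an arbitrary submodule $L$ satisfying $IJQL\subseteq P$ (i.e., verifies the ideal-theoretic characterization), while you specialize to principal ideals matching the element-wise definition; your write-up is in fact slightly more careful, since you state the negation as a conjunction and check upper bounds for chains and properness explicitly.
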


\begin{proof}
Let $\Psi =\left\{ N\mid N\text{ is a submodule of }M\text{ and }N\cap
S=\emptyset \right\} $. Then $\left( 0\right) \in \Psi \neq \emptyset $.
Since $\Psi $ is partially ordered by using Zorn's Lemma we get at least a
maximal element of $\Psi $, say $P$, with property $P\cap S=\emptyset $. Now
we will show that $P$ is classical 2-absorbing. Suppose that $IJQL\subseteq P
$ for ideals $I$, $J$, $Q$ of $R$ and submodule $L$ of $M$. Assume that $%
IJL\nsubseteq P$ or $IQL\nsubseteq P$ or $JQL\nsubseteq P$. Then by the
maximality of $P$ we get $\left( IJL+P\right) \cap S\neq \emptyset $ and $%
\left( IQL+P\right) \cap S\neq \emptyset $ and $\left( JQL+P\right) \cap
S\neq \emptyset $. Since $S$ is a classical 2-absorbing m-closed we have $%
\left( IJQL+P\right) \cap S\neq \emptyset $. Hence $P\cap S\neq \emptyset $, which
is a contradiction. Thus $P$ is a classical 2-absorbing submodule of $M$.
\end{proof}

\begin{theorem}\label{flat}
Let $R$ be a $um$-ring and $M$ be an $R$-module.
\end{theorem}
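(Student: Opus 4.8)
The plan is to move the $2$-absorbing condition back and forth between $R$ and $M$ using the colon formula for flat modules, and then to dispose of the resulting set-theoretic unions by exploiting that a $um$-ring is in particular a $u$-ring. The one computational input I would establish at the outset is that for a flat module $M$, every ideal $I$ of $R$, and every $r\in R$,
\[
(IM:_M r)=(I:_R r)M.
\]
This is obtained by tensoring the exact sequence $0\to (I:_R r)/I\to R/I\xrightarrow{\,r\,}R/I$ with $M$: flatness preserves exactness and identifies the kernel $(IM:_M r)/IM$ with $((I:_R r)/I)\otimes_R M=(I:_R r)M/IM$. I would also record that a $um$-ring is a $u$-ring, since an ideal $D$ that is a finite union of sub-ideals $A_i$ is, as an $R$-module, the union of the submodules $D\cap A_i=A_i$, so the $um$-property forces $D=A_i$ for some $i$.

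With these in hand, the implication ``$I$ a $2$-absorbing ideal $\Rightarrow$ $IM$ a classical $2$-absorbing submodule'' (for flat $M$) would be verified through condition (2) of Theorem \ref{main}. Fixing $a,b,c\in R$, I would rewrite each colon submodule of $M$ as an extension from $R$, reducing the required identity to
\[
(I:_R abc)M=(I:_R ab)M\cup(I:_R ac)M\cup(I:_R bc)M.
\]
Because $I$ is $2$-absorbing, Theorem \ref{main} applied to $R$ gives the ideal-level equality $(I:_R abc)=(I:_R ab)\cup(I:_R ac)\cup(I:_R bc)$, and each of the three ideals on the right is contained in $(I:_R abc)$. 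The $u$-ring property then collapses this union to a single ideal, say $(I:_R abc)=(I:_R ab)$; extending this one equality by $M$ yields the displayed identity, so $IM$ is classical $2$-absorbing. For the converse, under the stronger assumption that $M$ is faithfully flat one has $(IM:_R M)=\mathrm{Ann}_R(M/IM)=\mathrm{Ann}_R(R/I)=I$; then $abc\in I$ gives $abcM\subseteq IM$, so $M$ equals the union of the three submodules $(IM:_M ab)$, $(IM:_M ac)$, $(IM:_M bc)$, and the $um$-property forces $M$ to be one of them, whence $ab\in(IM:_R M)=I$ (or the analogous membership) and $I$ is $2$-absorbing.

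The genuine obstacle I anticipate is exactly the collapse of the three-fold union. A priori flatness only yields $(I:_R abc)M\subseteq (I:_R ab)M+(I:_R ac)M+(I:_R bc)M$, and this \emph{sum} is in general strictly larger than the union that appears in the classical $2$-absorbing criterion; flatness alone therefore cannot close the argument. What rescues it is that the $u$-ring hypothesis makes the three colon ideals nested \emph{inside $R$}, so the union is already a single ideal before any extension to $M$. Consequently the order of operations is essential: I would perform the union collapse entirely within $R$ and only afterwards apply the flat colon formula, rather than attempting to manipulate unions of submodules of $M$ directly. Once this point is handled, every remaining implication is formal and follows the pattern of the equivalences already recorded in Theorems \ref{main} and \ref{main2}.
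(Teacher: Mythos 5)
Your proposal proves a different statement from the one the theorem actually makes. The content of Theorem \ref{flat} (the two numbered items following the header) concerns an arbitrary $R$-module $M$ over the $um$-ring $R$, a classical $2$-absorbing submodule $N$ of $M$, and a flat (resp.\ faithfully flat) $R$-module $F$; it asserts that $F\otimes N$ is a classical $2$-absorbing submodule of $F\otimes M$ (resp.\ that this property ascends and descends along $F\otimes-$). Your argument treats only the special case $M=R$, $N=I$: there $F\otimes R\cong F$ and $F\otimes I\cong IF$, and a classical $2$-absorbing submodule of $R$ is the same thing as a $2$-absorbing ideal, so what you establish is the instance of the theorem in which the base module is the ring itself. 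Nothing in your proof addresses a submodule $N$ of a general $M$. Concretely, your key identity $(IM:_M r)=(I:_R r)M$ is the $M=R$ case of what is actually needed, namely the submodule colon formula $\bigl(F\otimes N:_{F\otimes M}r\bigr)=F\otimes(N:_M r)$ for flat $F$ (the paper quotes this as Lemma 3.2 of \cite{A}); and your collapse of the three-fold union happens among ideals of $R$ via the $u$-ring property, whereas the general case must collapse $(N:_M abc)=(N:_M ab)\cup(N:_M ac)\cup(N:_M bc)$ among submodules of $M$, which is exactly what the full $um$-hypothesis and Theorem \ref{main2}(2) provide.

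That said, your architecture is precisely the paper's, and it would generalize once stated at the right level: first use the $um$-property to turn the union characterization into a single colon equality, and only then apply flatness, since tensoring does not commute with unions --- the obstacle you correctly identified. The paper's proof of (1) is: by Theorem \ref{main2}, $(N:_Mabc)=(N:_Mab)$, say; then $(F\otimes N:_{F\otimes M}abc)=F\otimes(N:_Mabc)=F\otimes(N:_Mab)=(F\otimes N:_{F\otimes M}ab)$, and Theorem \ref{main2} applied to $F\otimes M$ gives the conclusion. For (2), instead of your annihilator computation $(IM:_R M)=I$, the paper descends the equality $F\otimes(N:_Mab)=F\otimes(N:_Mabc)$ to $(N:_Mab)=(N:_Mabc)$ by faithful flatness, and likewise descends $F\otimes N\neq F\otimes M$ to $N\neq M$. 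To repair your proposal, keep its strategy, replace your colon formula by the cited lemma, run the collapse inside $M$ rather than inside $R$, and also check properness of the tensored submodule: your write-up never verifies $IM\neq M$, which is the hypothesis $F\otimes N\neq F\otimes M$ appearing in part (1).
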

\begin{enumerate}
\item If $F$ is a flat $R$-module and $N$ is a classical $2$%
-absorbing submodule of $M$ such that $F\otimes N\neq F\otimes M,$ then $%
F\otimes N$ is a classical $2$-absorbing submodule of $F\otimes M.$

\item Suppose that $F$ is a faithfully flat $R$-module. Then $N$ is a
classical $2$-absorbing submodule of $M$ if and only if $F\otimes N$ is a
classical $2$-absorbing submodule of $F\otimes M.$
\end{enumerate}
\begin{proof}
$\left( 1\right) $ Let $a,b,c\in R$. Then we get by Theorem \ref{main2}, $\left(
N:_Mabc\right) =\left( N:_Mab\right) $ \ or $\left( N:_Mabc\right) =\left(
N:_Mac\right) $ \ or $\left( N:_Mabc\right) =\left( N:_Mbc\right) $. Assume that $%
\left( N:_Mabc\right) =\left( N:_Mab\right) $. Then by \cite[Lemma 3.2]{A}, $\left( F\otimes
N:_{F\otimes M}abc\right) =F\otimes \left( N:_Mabc\right) =F\otimes \left( N:_Mab\right)
=\left( F\otimes N:_{F\otimes M}ab\right) $. Again Theorem \ref{main2} implies that $F\otimes N$ is a
classical 2-absorbing submodule of $F\otimes M.$

$\left( 2\right) $ Let $N$ be a classical 2-absorbing submodule of $M$
and assume that $F\otimes N=F\otimes M$. Then $0\rightarrow F\otimes N%
\overset{\subseteq }{\rightarrow }F\otimes M\rightarrow 0$ is an exact
sequence. Since $F$ is a faithfully flat module, $0\rightarrow N\overset{%
\subseteq }{\rightarrow }M\rightarrow 0$ is an exact sequence. So $N=M$,
which is a contradiction. So $F\otimes N\neq F\otimes M$. Then $F\otimes N$
is a classical 2-absorbing submodule by $\left( 1\right) $. Now for
conversely, let $F\otimes N$ be a classical 2-absorbing submodule of $%
F\otimes M$. We have $F\otimes N\neq F\otimes M$ and so $N\neq M$. Let $%
a,b,c\in R$. Then $\left( F\otimes N:_{F\otimes M}abc\right) =\left( F\otimes
N:_{F\otimes M}ab\right) $ or $\left( F\otimes N:_{F\otimes M}abc\right) =\left( F\otimes
N:_{F\otimes M}ac\right) $ or $\left( F\otimes N:_{F\otimes M}abc\right) =\left( F\otimes N:_{F\otimes M}bc\right) $
by Theorem \ref{main2}. Assume that $\left( F\otimes N:_{F\otimes M}abc\right) =\left( F\otimes
N:_{F\otimes M}ab\right) $. Hence $F\otimes \left( N:_Mab\right) =\left( F\otimes
N:_{F\otimes M}ab\right) =\left( F\otimes N:_{F\otimes M}abc\right) =F\otimes \left( N:_Mabc\right) $.
So $0\rightarrow F\otimes \left( N:_Mab\right) \overset{\subseteq }{%
\rightarrow }F\otimes \left( N:_Mabc\right) \rightarrow 0$ is an exact
sequence. Since $F$ is a faithfully flat module, $0\rightarrow \left(
N:_Mab\right) \overset{\subseteq }{\rightarrow }\left( N:_Mabc\right)
\rightarrow 0$ is an exact sequence which implies that $\left( N:_Mab\right)
=\left( N:_Mabc\right) $. Consequently $N$ is a classical 2-absorbing
submodule of $M$ by Theorem \ref{main2}.
\end{proof}

\begin{corollary}
Let $R$ be a $um$-ring, $M$ be an $R$-module and $X$ be an indeterminate. If $N$ is a classical 2-absorbing submodule of $M$,
then $N[X]$ is a classical 2-absorbing submodule of $M[X]$.
\end{corollary}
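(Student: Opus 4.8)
The plan is to deduce this directly from Theorem \ref{flat} by exhibiting the polynomial extension as a flat base change. First I would observe that the polynomial ring $R[X]$, viewed as an $R$-module, is free on the basis $\{1,X,X^{2},\dots\}$ and hence flat over $R$ (being a nonzero free module, it is in fact faithfully flat). Since the corollary already assumes $R$ is a $um$-ring and $M$ is an $R$-module, this places us exactly in the hypotheses of Theorem \ref{flat} with $F=R[X]$.

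Next I would set up the two identifications $R[X]\otimes_R M\cong M[X]$ and $R[X]\otimes_R N\cong N[X]$. The natural map $R[X]\otimes_R M\to M[X]$ sending $X^{i}\otimes m$ to $mX^{i}$ is an $R$-module isomorphism, and under it the image of $R[X]\otimes_R N$ is precisely $N[X]$. Because $R[X]$ is flat, the inclusion $N\hookrightarrow M$ induces an injection $R[X]\otimes_R N\hookrightarrow R[X]\otimes_R M$, so these identifications really do present $F\otimes N$ and $F\otimes M$ as the submodule $N[X]$ inside $M[X]$.

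Then I would verify the nondegeneracy hypothesis $F\otimes N\neq F\otimes M$ of Theorem \ref{flat}(1). Since $N$ is a proper submodule we have $N\neq M$, and consequently $N[X]\neq M[X]$, i.e. $R[X]\otimes_R N\neq R[X]\otimes_R M$. With $N$ classical $2$-absorbing in $M$ by hypothesis and this nondegeneracy in hand, Theorem \ref{flat}(1) yields that $F\otimes N=N[X]$ is a classical $2$-absorbing submodule of $F\otimes M=M[X]$, which is the assertion.

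The step I expect to require the most care is the bookkeeping in the second paragraph: one must check that flatness of $R[X]$ makes $R[X]\otimes_R N\to R[X]\otimes_R M$ injective with image exactly $N[X]$, and, more delicately, that the classical $2$-absorbing property produced by Theorem \ref{flat} (which is formulated with scalars from $R$) is transported faithfully into the intended property of $N[X]$ inside $M[X]$. By contrast, the flatness of $R[X]$ and the properness $N[X]\neq M[X]$ are routine once the identifications are fixed.
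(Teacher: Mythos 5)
Your proposal is correct and follows exactly the paper's own route: the paper likewise notes that $R[X]$ is flat over $R$ and applies Theorem \ref{flat} together with the identifications $R[X]\otimes N\simeq N[X]$ and $R[X]\otimes M\simeq M[X]$. Your write-up is simply more careful than the paper's three-line proof, in that you explicitly verify the hypothesis $F\otimes N\neq F\otimes M$ and spell out the identifications.
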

\begin{proof}
Assume that $N$ is a classical 2-absorbing submodule of $M$. Notice that $R[X]$ is a flat $R$-module.
So by Theorem \ref{flat}, $R[X]\otimes N\simeq N[X]$ is a classical 2-absorbing submodule of $R[X]\otimes M\simeq M[X]$.
\end{proof}

For an $R$-module $M$, the set of zero-divisors of $M$ is denoted by $Z_R(M)$.
\begin{proposition}
Let $M$ be an $R$-module, $N$ be a submodule and $S$ be a multiplicative
subset of $R$. 
\begin{enumerate}
\item If $N$ is a classical 2-absorbing submodule of $M$ such that $\left( N:_RM\right)\cap S=\emptyset$,
then $S^{-1}N$ is a classical 2-absorbing submodule of $S^{-1}M$.
\item If $S^{-1}N$ is a classical 2-absorbing submodule of $S^{-1}M$ such that $Z_R(M/N)\cap S=\emptyset$,
then $N$ is a classical 2-absorbing submodule of $M$.
\end{enumerate}
\end{proposition}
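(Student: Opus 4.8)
The plan is to argue both implications straight from the definition of a classical 2-absorbing submodule, the only external inputs being the standard description of elements of a localization (every element of $S^{-1}R$ is a unit multiple of some $r/1$, and every element of $S^{-1}M$ a unit multiple of some $m/1$) together with the two hypotheses, which serve respectively to guarantee properness and to control contraction back to $M$. An alternative would be to invoke Theorem \ref{main}(10) and reduce everything to the localization behaviour of $2$-absorbing ideals, but the direct route is more transparent.

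For part $(1)$, I would first check that $S^{-1}N$ is a proper submodule of $S^{-1}M$, which is exactly where the hypothesis $(N:_RM)\cap S=\emptyset$ is used; were $S^{-1}N=S^{-1}M$ one would contradict this condition. For the absorbing property, take $a/s,b/t,c/u\in S^{-1}R$ and $m/v\in S^{-1}M$ with their product in $S^{-1}N$. Since $1/s,1/t,1/u,1/v$ are units of $S^{-1}R$, membership in $S^{-1}N$ is unchanged by clearing them, so the hypothesis is equivalent to $abcm/1\in S^{-1}N$ with $a,b,c\in R$ and $m\in M$. Unwinding the definition of $S^{-1}N$ yields some $w\in S$ with $w\,abcm\in N$, i.e. $abc(wm)\in N$ with $wm\in M$. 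Applying the classical 2-absorbing property of $N$ to $a,b,c$ and the element $wm$ gives one of $ab(wm),\,ac(wm),\,bc(wm)\in N$; multiplying back by the unit $w/1$ then returns the corresponding one of $(a/s)(b/t)(m/v),\,(a/s)(c/u)(m/v),\,(b/t)(c/u)(m/v)$ into $S^{-1}N$, as required.

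For part $(2)$, properness of $N$ is immediate, since $S^{-1}N\neq S^{-1}M$ forces $N\neq M$. The crux is a contraction fact extracted from $Z_R(M/N)\cap S=\emptyset$: this condition says precisely that every $s\in S$ is a nonzerodivisor on $M/N$, and from it I would deduce that $m/1\in S^{-1}N$ implies $m\in N$. Indeed, $m/1\in S^{-1}N$ produces $s\in S$ with $sm\in N$, and since $s$ acts injectively on $M/N$ this forces $m\in N$. Granting this, let $a,b,c\in R$ and $m\in M$ satisfy $abcm\in N$. Then $(a/1)(b/1)(c/1)(m/1)\in S^{-1}N$, so the classical 2-absorbing property of $S^{-1}N$ places one of $abm/1,\,acm/1,\,bcm/1$ in $S^{-1}N$; the contraction fact then returns the corresponding $abm$, $acm$, or $bcm$ to $N$, which is exactly the conclusion.

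The genuine content is the contraction identity in part $(2)$, namely that the nonzerodivisor hypothesis collapses the preimage of $S^{-1}N$ back to $N$; and in part $(1)$ the one point deserving care is that properness of $S^{-1}N$ really does follow from $(N:_RM)\cap S=\emptyset$. The absorbing manipulations themselves are routine once the unit denominators $1/s$ of $S^{-1}R$ are cleared. I would therefore isolate the contraction lemma first, after which both the ascent (part $1$) and the descent (part $2$) reduce to a single application of the defining property of classical 2-absorbing submodules.
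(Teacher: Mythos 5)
Your handling of the absorbing property is exactly the paper's argument: in part (1) you clear the unit denominators, extract $w\in S$ with $w\,abcm\in N$, apply the defining property of $N$ to $a,b,c$ and the element $wm$, and push the conclusion back into $S^{-1}N$; in part (2) you pass from $abcm\in N$ to $(a/1)(b/1)(c/1)(m/1)\in S^{-1}N$, apply the defining property of $S^{-1}N$, and contract back using that each element of $S$ acts injectively on $M/N$ --- this contraction step is precisely the paper's ``there exists $u\in S$ such that $uabm\in N$; but $Z_R(M/N)\cap S=\emptyset$, whence $abm\in N$.'' Part (2), including its trivial properness check, is fine.

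The one place you go beyond the paper is the properness check in part (1), and there your claimed inference does not hold. From $S^{-1}N=S^{-1}M$ one can only conclude that every element of $M/N$ is annihilated by \emph{some} element of $S$; to contradict $(N:_RM)\cap S=\emptyset$ you would need a \emph{single} element of $S$ annihilating all of $M/N$, and without finite generation of $M/N$ there is no way to produce one. In fact the inference is false in general: take $R=k[x_1,x_2,\dots]$, $P_i=(x_i,x_{i+1},\dots)$, $M=\bigoplus_{i\geq 1}R/P_i$, $N=0$, and $S$ the multiplicative set of all monomials in the $x_i$. Every nonzero $m\in M$ has $(0:_Rm)=P_j$, where $j$ is the largest index in the support of $m$ (the $P_i$ form a descending chain), so $N=0$ is classical prime and hence classical 2-absorbing; moreover $(N:_RM)=\bigcap_i P_i=0$ is disjoint from $S$; yet $x_jm=0$ for each such $m$, so $S^{-1}M=0=S^{-1}N$ is not proper, hence not classical 2-absorbing. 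So properness can genuinely fail under the stated hypotheses. To be fair, this is a defect of the proposition itself: the paper's own proof never addresses properness of $S^{-1}N$ at all, so your proof is no weaker than the published one --- your instinct that properness needs checking was correct, but it cannot be deduced from $(N:_RM)\cap S=\emptyset$; one would need an additional hypothesis (e.g., $M/N$ finitely generated, or directly assuming $S^{-1}N\neq S^{-1}M$).
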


\begin{proof}
(1) Let $N$ be a classical 2-absorbing submodule of $M$ and $\left( N:_RM\right) \cap S=\emptyset $%
. Suppose that $\frac{a_{1}}{s_{1}}\frac{a_{2}}{s_{2}}\frac{a_{3}}{s_{3}}%
\frac{m}{s_{4}}\in S^{-1}N$. Then there exist $n\in N$ and $s\in S$ such
that $\frac{a_{1}}{s_{1}}\frac{a_{2}}{s_{2}}\frac{a_{3}}{s_{3}}\frac{m}{s_{4}%
}=\frac{n}{s}$. Therefore there exists an $s^{\prime }\in S$ such that $%
s^{\prime }sa_{1}a_{2}a_{3}m=s^{\prime }s_{1}s_{2}s_{3}s_{4}n\in N$. So $%
a_{1}a_{2}a_{3}\left( s^{\ast }m\right) \in N$ for $s^{\ast }=s^{\prime }s$.
Since $N$ is a classical 2-absorbing submodule we get $a_{1}a_{2}\left(
s^{\ast }m\right)\in N$ or $a_{1}a_{3}\left( s^{\ast }m\right) \in N$ or $%
a_{2}a_{3}\left( s^{\ast }m\right) \in N$. Thus $\frac{a_{1}a_{2}m}{%
s_{1}s_{2}s_{4}}=\frac{a_{1}a_{2}( s^{\ast }m)}{s_{1}s_{2}s_{4}s^{\ast}}\in
S^{-1}N$ or $\frac{a_{1}a_{3}m}{s_{1}s_{3}s_{4}}\in S^{-1}N$ or $%
\frac{a_{2}a_{3}m}{s_{2}s_{3}s_{4}}\in S^{-1}N$.

(2) Assume that $S^{-1}N$ is a classical 2-absorbing submodule of $S^{-1}M$ and $Z_R(M/N)\cap S=\emptyset$.
Let $a,b,c\in R$ and $m\in M$ such that $abcm\in N$. Then $\frac{a}{1}\frac{b}{1}\frac{c}{1}\frac{m}{1}\in S^{-1}N$.
Therefore $\frac{a}{1}\frac{b}{1}\frac{m}{1}\in S^{-1}N$ or $\frac{a}{1}\frac{c}{1}\frac{m}{1}\in S^{-1}N$
or $\frac{b}{1}\frac{c}{1}\frac{m}{1}\in S^{-1}N$. We may assume that $\frac{a}{1}\frac{b}{1}\frac{m}{1}\in S^{-1}N$.
So there exists $u\in S$ such that $uabm\in N$. But $Z_R(M/N)\cap S=\emptyset$, whence $abm\in N$. 
Consequently $N$ is a classical 2-absorbing submodule of $M$.
\end{proof}

Let $R_i$ be a commutative ring with identity and $M_i$ be an $R_i$-module,
for $i = 1, 2$. Let $R=R_{1}\times R_{2}$. Then $M=M_{1}\times M_{2}$ is an $%
R$-module and each submodule of $M$ is in the form of $N=N_{1}\times N_{2}$ for
some submodules $N_1$ of $M_1$ and $N_2$ of $M_2$.
\begin{theorem}\label{product1}
Let $R=R_{1}\times R_{2}$ be a decomposable ring and $M=M_{1}\times M_{2}$ be an $R$-module where
$M_{1}$ is an $R_{1}$-module and $M_{2}$ is an $R_{2}$-module. Suppose that $N=N_{1}\times N_{2}$ is
a proper submodule of $M$. Then the following conditions are equivalent:

\begin{enumerate}
\item $N$ is a classical 2-absorbing submodule of $M$;

\item Either $N_{1}=M_{1}$ and $N_{2}$ is a classical 2-absorbing submodule of $M_{2}$
or $N_{2}=M_{2}$ and $N_{1}$ is a classical 2-absorbing submodule of $M_{1}$
or $N_{1},$ $N_{2}$ are classical prime submodules of $M_{1}$, $M_{2},$ respectively.
\end{enumerate}
\end{theorem}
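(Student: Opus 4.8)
The plan is to prove both implications separately, handling $(2)\Rightarrow(1)$ by reduction to results already established and reserving the genuine work for $(1)\Rightarrow(2)$. Throughout I use that the action on $M=M_1\times M_2$ is coordinatewise and that the idempotents $(1,0),(0,1)\in R=R_1\times R_2$ let me isolate one factor at a time.

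For $(2)\Rightarrow(1)$ I would dispose of the three alternatives one by one. If $N_1=M_1$ and $N_2$ is classical $2$-absorbing, I observe that $N=M_1\times N_2=f^{-1}(N_2)$ for the projection epimorphism $f:M\to M_2$, so Theorem \ref{im}(1) immediately gives that $N$ is classical $2$-absorbing; the case $N_2=M_2$ is symmetric via the other projection. If instead $N_1,N_2$ are classical prime, I would write $N=(N_1\times M_2)\cap(M_1\times N_2)$ and check that each factor is a classical prime submodule of $M$: for instance, if $(a_1,a_2)(b_1,b_2)(m_1,m_2)\in N_1\times M_2$ then $a_1b_1m_1\in N_1$, and classical primeness of $N_1$ splits this, so $N_1\times M_2$ is classical prime (and proper, since $N_1\neq M_1$). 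Proposition \ref{intersection} then yields that the intersection $N$ of these two classical prime submodules is classical $2$-absorbing.

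For $(1)\Rightarrow(2)$, assume $N$ is classical $2$-absorbing and split into cases according to whether $N_1=M_1$ and whether $N_2=M_2$; since $N$ is proper, these cannot both hold. If exactly one holds, say $N_1=M_1$ (so $N_2\neq M_2$), then $N=f^{-1}(N_2)$ contains $\ker f=M_1\times 0$, and applying Theorem \ref{im}(2) to the projection $f:M\to M_2$ shows that $f(N)=N_2$ is classical $2$-absorbing, landing in the first alternative; the symmetric subcase gives the second alternative.

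The heart of the argument is the remaining subcase $N_1\neq M_1$ and $N_2\neq M_2$, where I must show both $N_1$ and $N_2$ are classical prime. To prove $N_1$ is classical prime, suppose $a_1b_1m_1\in N_1$ and fix some $m_2\in M_2\setminus N_2$ (available because $N_2\neq M_2$). The trick is to feed the triple $(a_1,1),\,(b_1,1),\,(1,0)$ together with the element $(m_1,m_2)$ into the classical $2$-absorbing hypothesis: their product is $(a_1b_1m_1,0)\in N_1\times N_2=N$, so one of the three pairwise products lies in $N$. Now $(a_1,1)(b_1,1)(m_1,m_2)=(a_1b_1m_1,m_2)$ is excluded because $m_2\notin N_2$, while $(a_1,1)(1,0)(m_1,m_2)=(a_1m_1,0)$ lies in $N$ iff $a_1m_1\in N_1$ and $(b_1,1)(1,0)(m_1,m_2)=(b_1m_1,0)$ lies in $N$ iff $b_1m_1\in N_1$. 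Hence $a_1m_1\in N_1$ or $b_1m_1\in N_1$, so $N_1$ is classical prime; the factor $(1,0)$ is exactly what annihilates the second coordinate and forces the two informative alternatives. The symmetric choice (using $m_1\in M_1\setminus N_1$ and the idempotent $(0,1)$) shows $N_2$ is classical prime, placing us in the third alternative. I expect this element-juggling in the last subcase to be the main obstacle: the whole argument hinges on choosing the idempotent coordinate and the point $m_2$ outside $N_2$ so that the $2$-absorbing conclusion collapses to the desired prime dichotomy.
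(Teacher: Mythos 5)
Your proposal is correct and follows essentially the same route as the paper: the same idempotent trick with $(a_1,1),(b_1,1),(1,0)$ acting on $(m_1,m_2)$ for $m_2\in M_2\setminus N_2$ in the key case, the same reduction via Theorem \ref{im} (the paper uses its Corollary \ref{quo}, which is the quotient form of the same theorem) when one coordinate is full, and the same decomposition $N=(N_1\times M_2)\cap(M_1\times N_2)$ with Proposition \ref{intersection} for the converse. Your write-up is if anything slightly more careful, since you verify explicitly that $N_1\times M_2$ is classical prime where the paper declares it clear.
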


\begin{proof}
(1)$\Rightarrow$(2) Suppose that $N$ is a classical 2-absorbing submodule
of $M$ such that $N_2=M_2$. From our hypothesis, $N$ is proper, so $N_{1}\not=M_{1}.$ Set $%
M^{\prime }=\frac{M}{\{0\}\times M_{2}}$. Hence $N^{\prime }=\frac{N}{%
\{0\}\times M_{2}}$ is a classical 2-absorbing submodule of $M^{\prime }$ by
Corollary \ref{quo}. Also observe that $M^{\prime }\cong M_{1}$ and $%
N^{\prime }\cong N_{1}$. Thus $N_{1}$ is a classical 2-absorbing submodule of $%
M_{1}.$ Suppose that $N_{1}\not=M_{1}$ and $N_{2}\not=M_{2}$. We show that
$N_1$ is a classical prime submodule of $M_1$. Since $N_{2}\not=M_{2}$, there exists
$m_2\in M_2\backslash N_2$. Let $abm_1\in N_1$ for some $a,b\in R_1$ and $m_1\in M_1$.
Thus $(a,1)(b,1)(1,0)(m_1,m_2)=(abm_1,0)\in N=N_1\times N_2$. So either $(a,1)(1,0)(m_1,m_2)=(am_1,0)\in N$
or $(b,1)(1,0)(m_1,m_2)=(bm_1,0)\in N$. Hence either $am_1\in N_1$ or $bm_1\in N_1$ which shows that
$N_1$ is a classical prime submodule of $M_{1}$. Similarly we can show that $N_2$ is a classical prime submodule of $M_2$. \newline
(2)$\Rightarrow$(1) Suppose that $N=N_{1}\times M_{2}$ where $N_{1}$ is a classical 2-absorbing 
(resp. classical prime) submodule of $M_{1}$.
Then it is clear that $N$ is a classical 2-absorbing (resp. classical prime) 
submodule of $M$. Now, assume that $N=N_{1}\times N_{2}$ where $N_{1}$ and $N_{2}$ are classical prime
submodules of $M_{1}$ and $M_{2}$, respectively. Hence $(N_{1}\times
M_{2})\cap (M_{1}\times N_{2})=N_{1}\times N_{2}=N$ is a classical 2-absorbing
submodule of $M$, by Proposition \ref{intersection}.
\end{proof}

\begin{lemma}\label{product2} 
Let $R=R_1\times R_2\times\cdots\times R_n$ be a decomposable ring and 
$M=M_1\times M_2\times\cdots\times M_n$ be an $R$-module where for every $1\leq i\leq n$,
$M_{i}$ is an $R_{i}$-module, respectively. A proper submodule $N$ of $M$ is a 
classical prime submodule of $M$ if and only if $N=\times_{i=1}^{n}N_i$ such that for
some $k\in\{1,2,...,n\}$, $N_k$ is a classical prime submodule of $M_k$, and $%
N_i=M_i$ for every $i\in\{1,2,...,n\}\backslash\{k\}$.
\end{lemma}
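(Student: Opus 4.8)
The plan is to prove both implications by direct componentwise computation, using the orthogonal idempotents of the decomposable ring $R$. Recall first that since $M$ is a module over the product ring $R=R_1\times\cdots\times R_n$, every submodule $N$ automatically decomposes as $N=\times_{i=1}^n N_i$ with each $N_i$ a submodule of $M_i$ (the case $n=2$ is the remark recorded before Theorem \ref{product1}, and the general case follows by induction), so I may assume the product form throughout. For each $j$ write $e_j=(0,\dots,0,1,0,\dots,0)\in R$ for the element whose $j$-th coordinate is the identity $1\in R_j$ and whose other coordinates are $0$; these satisfy $e_je_l=0$ whenever $j\neq l$, and $e_jm$ is the element of $M$ agreeing with $m$ in coordinate $j$ and vanishing elsewhere.

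For the easy direction ($\Leftarrow$), suppose $N=\times_{i=1}^n N_i$ with $N_k$ classical prime in $M_k$ and $N_i=M_i$ for $i\neq k$. Given $a=(a_i)_i$, $b=(b_i)_i\in R$ and $m=(m_i)_i\in M$ with $abm\in N$, I read off $a_ib_im_i\in N_i$ for every $i$. For $i\neq k$ the condition $a_im_i\in N_i=M_i$ is automatic, while for $i=k$ the classical primeness of $N_k$ gives $a_km_k\in N_k$ or $b_km_k\in N_k$; in the first case $am\in N$ and in the second $bm\in N$. Since $N_k\neq M_k$ forces $N\neq M$, the submodule $N$ is proper, hence classical prime.

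For the forward direction ($\Rightarrow$), assume $N=\times_{i=1}^n N_i$ is classical prime. The first step is to show at most one factor is proper. Suppose instead that $N_j\neq M_j$ and $N_l\neq M_l$ for distinct indices $j,l$; choose $m_j\in M_j\backslash N_j$ and $m_l\in M_l\backslash N_l$ and let $m$ be the element of $M$ with these entries in coordinates $j$ and $l$ and $0$ elsewhere. Then $e_je_lm=0\in N$, yet $e_jm\notin N$ (its $j$-th entry is $m_j\notin N_j$) and likewise $e_lm\notin N$, contradicting classical primeness. Since $N$ is proper, at least one factor is proper, so exactly one factor $N_k$ is proper and $N_i=M_i$ for all $i\neq k$.

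It remains to verify that $N_k$ is classical prime in $M_k$; this is the step requiring the most care about embedding back into $M$. Given $a_k,b_k\in R_k$ and $m_k\in M_k$ with $a_kb_km_k\in N_k$, I would set $a=(1,\dots,a_k,\dots,1)$ and $b=(1,\dots,b_k,\dots,1)$ (with $a_k,b_k$ in position $k$ and identities elsewhere) and $m=(0,\dots,0,m_k,0,\dots,0)$. Then $abm$ has $k$-th coordinate $a_kb_km_k\in N_k$ and vanishes elsewhere, so $abm\in N$; classical primeness of $N$ then yields $am\in N$ or $bm\in N$, that is $a_km_k\in N_k$ or $b_km_k\in N_k$, and properness of $N_k$ was established above. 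The only genuine obstacle is the first step of this direction, ruling out two simultaneously proper factors, and the orthogonal-idempotent relation $e_je_l=0$ disposes of it cleanly; the remaining steps are routine componentwise bookkeeping.
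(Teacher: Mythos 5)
Your proof is correct and follows essentially the same route as the paper's: the same orthogonal-idempotent contradiction (the paper's $(0,\dots,1_{R_r},\dots,0)(0,\dots,1_{R_s},\dots,0)(0,\dots,m_r,\dots,m_s,\dots,0)=0\in N$) to rule out two proper factors, and the same embedding of a relation $a_kb_km_k\in N_k$ into $M$ to conclude $N_k$ is classical prime. The only differences are cosmetic: you use entries $1$ rather than $0$ off the $k$-th coordinate in the last step (both work since $m$ vanishes there), and you spell out the ($\Leftarrow$) direction that the paper dismisses as easy.
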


\begin{proof}
($\Rightarrow$) Let $N$ be a classical prime submodule of $M$.
We know $N=\times_{i=1}^{n}N_i$ where for every $1\leq i\leq n$, $N_i$ is
a submodule of $M_i$, respectively. Assume that $N_r$ is a proper submodule of $M_r$
and $N_s$ is a proper submodule of $M_s$ for some $1\leq r<s\leq n$. So,
there are $m_r\in M_r\backslash N_r$ and $m_s\in M_s\backslash N_s$.
Since 
$$
(0,\dots,0,\overbrace{1_{R_r}}^{r\mbox{-th}},0,\dots,0)(0,\dots,0,\overbrace{%
1_{R_s}}^{s\mbox{-th}},0,\dots,0)(0,\dots,0,\overbrace{m_r}^{r\mbox{-th}},0,\dots,0,
\overbrace{m_s}^{s\mbox{-th}},0,\dots,0)$$$$
=(0,\dots,0)\in N,
$$
then either 
$$(0,\dots,0,\overbrace{1_{R_r}}^{r\mbox{-th}},0,\dots,0)(0,\dots,0,\overbrace{m_r}^{r\mbox{-th}},0,\dots,0,
\overbrace{m_s}^{s\mbox{-th}},0,\dots,0)$$$$=(0,\dots,0,\overbrace{m_r}^{r\mbox{-th}},0,\dots,0)\in N$$
or $$(0,\dots,0,\overbrace{1_{R_s}}^{s\mbox{-th}},0,\dots,0)(0,\dots,0,\overbrace{m_r}^{r\mbox{-th}},0,\dots,0,
\overbrace{m_s}^{s\mbox{-th}},0,\dots,0)$$$$=(0,\dots,0,\overbrace{m_s}^{s\mbox{-th}},0,\dots,0)\in N,$$ which
is a contradiction. Hence exactly one of the $N_i$'s is proper, say $N_k$.
Now, we show that $N_k$ is a classical prime submodule of $M_k$. Let $abm_k\in
N_k $ for some $a,b\in R_k$ and $m_k\in M_k$. Therefore 
$$
(0,\dots,0,\overbrace{a}^{k\mbox{-th}},0,\dots,0)(0,\dots,0,\overbrace{b}^{k%
\mbox{-th}},0,\dots,0)(0,\dots,0,\overbrace{m_k}^{k%
\mbox{-th}},0,\dots,0)$$
$$=(0,\dots,0,\overbrace{abm_k}^{k\mbox{-th}},0,\dots,0)\in
N,$$
and so 
$$(0,\dots,0,\overbrace{a}^{k\mbox{-th}},0,\dots,0)(0,\dots,0,\overbrace{m_k}^{k%
\mbox{-th}},0,\dots,0)=(0,\dots,0,\overbrace{am_k}^{k\mbox{-th}},0,\dots,0)\in
N$$ or $$(0,\dots,0,\overbrace{b}^{k%
\mbox{-th}},0,\dots,0)(0,\dots,0,\overbrace{m_k}^{k%
\mbox{-th}},0,\dots,0)=(0,\dots,0,\overbrace{bm_k}^{k\mbox{-th}},0,\dots,0)\in
N.$$ Thus $am_k\in N_k $ or $bm_k\in N_k $ which implies that $N_k$ is a classical prime submodule of $M_k$.\newline
($\Leftarrow$) Is easy.
\end{proof}

\begin{theorem}\label{product3} 
Let $R=R_1\times R_2\times\cdots\times R_n$ $(2\leq n<\infty)$ be a decomposable ring and 
$M=M_1\times M_2\times\cdots\times M_n$ be an $R$-module where for every $1\leq i\leq n$,
$M_{i}$ is an $R_{i}$-module, respectively. For a proper submodule $N$ of $M$ the
following conditions are equivalent:

\begin{enumerate}
\item $N$ is a classical 2-absorbing submodule of $M$;

\item Either $N=\times^n_{t=1}N_t$ such that for some $k\in\{1,2,...,n\}$, $%
N_k$ is a classical 2-absorbing submodule of $M_k$, and $N_t=M_t$ for
every $t\in\{1,2,...,n\}\backslash\{k\}$ or $N=\times_{t=1}^nN_t$ such that
for some $k,m\in\{1,2,...,n\}$, $N_k$ is a classical prime submodule of $M_k$, 
$N_m$ is a classical prime submodule of $M_m$, and $N_t=M_t$ for every $%
t\in\{1,2,...,n\}\backslash\{k,m\}$.
\end{enumerate}
\end{theorem}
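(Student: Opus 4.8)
The plan is to argue by induction on $n$, with Theorem \ref{product1} serving as the base case $n=2$. For the inductive step I assume the equivalence has already been proved for decomposable rings with $n-1$ factors, and I regroup the two sides as $R=R_1\times R'$ and $M=M_1\times M'$, where $R'=R_2\times\cdots\times R_n$ and $M'=M_2\times\cdots\times M_n$. Since every submodule of a product splits, I write $N=N_1\times N'$ with $N'=\times_{t=2}^{n}N_t$, and I regard $N'$ as a submodule of the $R'$-module $M'$. The point of this regrouping is that Theorem \ref{product1} applies verbatim to the two-factor decomposition $R=R_1\times R'$, $M=M_1\times M'$, while the induction hypothesis and Lemma \ref{product2} apply to the $(n-1)$-fold factor $M'$.

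Applying Theorem \ref{product1} to $N=N_1\times N'$ yields exactly three possibilities: (a) $N_1=M_1$ and $N'$ is classical 2-absorbing in $M'$; (b) $N'=M'$ and $N_1$ is classical 2-absorbing in $M_1$; or (c) both $N_1$ and $N'$ are classical prime. In case (a), the induction hypothesis unfolds $N'$ into one of the two listed forms relative to the indices $\{2,\dots,n\}$, and adjoining $N_1=M_1$ keeps $N$ in the same form (with the distinguished index or indices now lying in $\{2,\dots,n\}$). In case (b) we obtain $N=N_1\times M_2\times\cdots\times M_n$ with $N_1$ classical 2-absorbing, which is the first target form with $k=1$. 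In case (c), Lemma \ref{product2} forces $N'$ to be full in every factor except one index $m\in\{2,\dots,n\}$, where $N_m$ is classical prime; together with $N_1$ classical prime this gives the second target form with the two prime indices $1$ and $m$. Reversing these matchings produces the desired forms in $M'$ from a given form of $N$, so the two descriptions of $N$ correspond precisely.

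A convenient feature of this strategy is that Theorem \ref{product1}, Lemma \ref{product2}, and the induction hypothesis are all biconditionals, so the forward and backward implications of the equivalence are obtained simultaneously rather than being argued separately; one only needs to verify that the three cases coming from Theorem \ref{product1} recombine bijectively with the two target forms. I expect the only real difficulty to be this bookkeeping of indices, and in particular the prime-prime case (c): one must invoke Lemma \ref{product2} to see that a classical prime $N'$ is necessarily prime in a \emph{single} factor and full elsewhere, which is exactly what converts ``$N_1$ and $N'$ classical prime'' into ``$N$ classical prime in two factors and full in the rest.'' Once this translation is in place, the remaining verifications that the ``one 2-absorbing factor, rest full'' and ``two prime factors, rest full'' submodules are classical 2-absorbing reduce, via the same split, to cases (a)–(c) of Theorem \ref{product1} and are routine.
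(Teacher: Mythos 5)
Your proposal is correct and takes essentially the same route as the paper: induction on $n$ with Theorem \ref{product1} as the base case and as the tool for the two-factor split in the inductive step, combined with Lemma \ref{product2} to convert ``classical prime in the grouped factor'' into ``classical prime in exactly one coordinate and full elsewhere.'' The only cosmetic difference is that you split off the first factor ($M=M_1\times M'$) while the paper splits off the last one ($M=K\times M_n$); the case analysis and bookkeeping are otherwise identical.
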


\begin{proof}
We argue induction on $n$. For $n=2$ the result holds by Theorem \ref{product1}.
Then let $3\leq n <\infty$ and suppose that the result is valid when $%
K=M_1\times\cdots\times M_{n-1}$. We show that the result holds when $%
M=K\times M_n$. By Theorem \ref{product1}, $N$ is a classical 2-absorbing
submodule of $M$ if and only if either $N=L\times M_n$ for some
classical 2-absorbing submodule $L$ of $K$ or $N=K\times L_n$ for some
classical 2-absorbing submodule $L_n$ of $M_n$ or $N=L\times L_n$ for
some classical prime submodule $L$ of $K$ and some classical prime submodule $%
L_n $ of $M_n$. Notice that by Lemma \ref{product2}, a proper submodule $L$ of $K$
is a classical prime submodule of $K$ if and only if $L=\times_{t=1}^{n-1}N_t$
such that for some $k\in\{1,2,...,n-1\}$, $N_k$ is a classical prime submodule
of $M_k$, and $N_t=M_t$ for every $t\in\{1,2,...,n-1\}\backslash\{k\}$.
Consequently we reach the claim.
\end{proof}

\vspace{3mm} \noindent \footnotesize 
\begin{minipage}[b]{10cm}
Hojjat Mostafanasab \\
Department of Mathematics and Applications, \\
University of Mohaghegh Ardabili, \\
P. O. Box 179, Ardabil, Iran. \\
Email: h.mostafanasab@uma.ac.ir, \hspace{1mm} h.mostafanasab@gmail.com
\end{minipage}

\vspace{3mm} \noindent \footnotesize
\begin{minipage}[b]{10cm}
\"{U}nsal Tekir\\
Department of Mathematics, \\ 
Marmara University, \\ 
Ziverbey, Goztepe, Istanbul 34722, Turkey. \\
Email: utekir@marmara.edu.tr
\end{minipage}

\vspace{3mm} \noindent \footnotesize
\begin{minipage}[b]{10cm}
K\"{u}r\c{s}at Hakan Oral\\
Department of Mathematics,\\ 
Yildiz Technical University,\\ 
Davutpasa Campus, Esenler, Istanbul, Turkey.\\
Email: khoral@yildiz.edu.tr
\end{minipage}

\end{document}